\begin{document}
\title{Isomorphisms between positive and negative $\beta$-transformations}
\author{Charlene Kalle}
\address{Mathematical Institute\\
Leiden University\\
Postbus 9512\\
2300 RA Leiden\\
the Netherlands}
 \email{kallecccj@math.leidenuniv.nl }

\subjclass{Primary, 37A05, 37A35, 11K55.} \keywords{$\beta$-transformation, measurable isomorphism}

\maketitle

\begin{abstract}
We compare a piecewise linear map with constant slope $\beta>1$ and a piecewise linear map with constant slope $-\beta$. These maps are called the positive and negative $\beta$-transformations. We show that for a certain set of $\beta$'s, the multinacci numbers, there exists a measurable isomorphism between these two maps. We further show that for for all other values of $\beta$ between 1 and 2 the two maps cannot be isomorphic.  
\end{abstract}

\newtheorem{prop}{Proposition}[section]
\newtheorem{theorem}{Theorem}[section]
\newtheorem{lemma}{Lemma}[section]
\newtheorem{cor}{Corollary}[section]
\newtheorem{remark}{Remark}[section]
\theoremstyle{definition}
\newtheorem{defn}{Definition}[section]
\newtheorem{ex}{Example}[section]

\maketitle

\section{Introduction}

A classical problem in ergodic theory is to classify measure preserving transformations. Since the celebrated results of Ornstein in 1970  (\cite{orn70}) much research has been done on finding invariants for invertible maps. For non-invertible systems there are some methods for comparing one-sided Markov shifts on a finite alphabet (see \cite{AMT97}, \cite{CL00} and \cite{HR02}), but in general not much is known. In this article we are interested in isomorphisms between certain piecewise linear expanding interval maps, called positive and negative $\beta$-transformations. As they are not invertible and they cannot in general be described by a finite state Markov shift, these maps do not fit in the framework of \cite{AMT97}, \cite{CL00} and \cite{HR02}.

The $\beta$-transformation $Tx = \beta x$ (mod 1) with $1< \beta <2$ is a simple example of a chaotic dynamical system. There is a close link between the transformation $T$ and $\beta$-expansions of real numbers, i.e., expressions of the form $ x = \sum_{k=1}^{\infty} \frac{b_k}{\beta^k}$, where $b_k \in \{0,1\}$ for all $k \ge 1$. Initiated by the works of R\'enyi (\cite{Ren57}) and Parry (\cite{Par60}), this relation was extensively studied. Dynamical properties of the map can be used to study combinatorial properties of the expansions and vice versa. In the past few years this link has lead to research on negative $\beta$-transformations. Recent results in theoretical computer science have shown that negative $\beta$-expansions, i.e., expansions of the form $x = \sum_{k=1}^{\infty}\frac{b_k}{(-\beta)^k}$, have better robustness properties when used for encoding purposes (\cite{AHK08}). In \cite{IS09} Ito and Sadahiro studied the relation between negative $\beta$-expansions and a specific piecewise linear map with constant slope $-\beta$ defined by
\[ S_{IS}(x) = -\beta x - \Big\lfloor -\beta x + \frac{\beta}{\beta+1} \Big\rfloor,\]
where $\lfloor y \rfloor$ indicates the largest integer not exceeding $y$. This transformation maps the interval $\big[ -\frac{\beta}{\beta+1}, \frac{1}{\beta+1} \big)$ to itself. Ito and Sadahiro used this map to show that the positive and negative $\beta$-expansions share many properties. Since then, many others have found similarities between positive and negative $\beta$-expansions (see \cite{DMP11}, \cite{FL09}, \cite{MP11} and \cite{NS12} for example). Piecewise linear expanding interval maps with constant negative slope in general and the map $S_{IS}$ in particular have also generated a lot of attention. The papers \cite{DK11}, \cite{Gora09}, \cite{Hof11}, \cite{IS09} and \cite{LS11} deal, among other things, with invariant measures, transitivity and attractors of the maps in question.

It seems natural to ask exactly how different the positive and negative slope maps really are. In this paper we let $1 < \beta <2$ and we compare two maps: the positive and negative $\beta$-transformations $T=T_{\beta}$ and $S=S_{\beta}:[0,1] \to [0,1]$ given by
\begin{equation}\label{q:tands}
Tx = \left\{
\begin{array}{ll}
\beta x, & \text{if } x \in \big[0, \frac{1}{\beta} \big),\\
\\
\beta x -1, & \text{if } x \in \big[ \frac{1}{\beta}, 1 \big),
\end{array}
\right.
\quad \text{ and } \quad Sx = \left\{
\begin{array}{ll}
1 -\beta x, & \text {if } \big[0, \frac{1}{\beta}\big),\\
\\
2 - \beta x, & \text{if } \big( \frac{1}{\beta},1 \big].
\end{array}\right.
\end{equation}
The map $S$ is measurably isomorphic to the map $S_{IS}$ with isomorphism $x \mapsto \frac{1}{\beta-1}-x$. As in \cite{LS11} we choose to work with $S$ instead of $S_{IS}$ since S is defined on the unit interval, it has the same critical point as $T$ and, as for $T$, its dynamical behaviour depends mainly on the orbit of the point 1. Figure~\ref{f:goldenmean} shows both maps for a specific $\beta$. Note that for all $1 < \beta < 2$ the map $T$ has only one fixed point, while $S$ has two. Hence, $T$ and $S$ cannot be topologically conjugated, i.e., there are no homeomorphisms $\theta: [0,1] \to [0,1]$ that preserve the dynamics by $\theta \circ T = S \circ \theta$. That is why we study the possibility of $T$ and $S$ being measurably isomorphic.

This article is organised as follows. We first describe the dynamical properties of the maps $T$ and $S$ that can be found in the literature. In the two next sections we prove the two main results. Theorem~\ref{t:multinacci} states that for a specific set of $\beta$'s, the multinacci numbers, the maps $T$ and $S$ are measurably isomorphic. It is proved by showing that $T$ and $S$ are both isomorphic to the same Markov shift. The second main result, Theorem~\ref{t:noniso}, states that for all other values of $\beta$ the maps $T$ and $S$ are not isomorphic. Showing that two maps are not isomorphic involves showing that no map can qualify as an isomorphism. Therefore the proof of Theorem~\ref{t:noniso} needed a novel approach. We consider the number of preimages that points in $[0,1]$ can possibly have under $T^n$ and $S^n$ for an appropriate choice of $n\ge 1$. We then show that these numbers do not coincide on sets of positive measure. For the proof of Theorems~\ref{t:multinacci} and~\ref{t:noniso} we need a precise description of the dynamical behaviour of the iterates of $T$ and $S$. The key result is Lemma~\ref{l:nbonacciS}.

\vskip .2cm
{\bf Acknowledgement:} The author worked on this project in various locations. The research was started in the EU FP6 Marie Curie Research Training Network CODY (MRTN 2006 035651) at Warwick University, then continued under the research grant FWF S6913 at the University of Vienna and finally finished under the NWO Veni grant 639.031.140 at Leiden University. The author would like to thank a number people for fruitful discussions and valuable suggestions: Karma Dajani, Sebastian van Strien, Henk Bruin and Tom Kempton.

\section{The dynamical properties of $T$ and $S$}
Let $1 < \beta <2$ and let $T$ and $S$ be the transformations defined in (\ref{q:tands}). Let $\mathcal B$ be the Borel $\sigma$-algebra on $[0,1]$ and $\lambda$ the Lebesgue measure on $([0,1], \mathcal B)$. It is well known that there exists a unique invariant measure $\mu$ for $T$ equivalent to $\lambda$. This measure is ergodic and it is the unique measure of maximal entropy with entropy $h_{\mu}(T) = \log \beta$. For $S$ it is known that a unique invariant measure, absolutely continuous with respect to $\lambda$ exists and that this is measure is ergodic (see \cite{LasY73} and \cite{LY78}). We call this measure $\nu$. Combined results from \cite{Hof81} and \cite{HK82} give that $\nu$ is the unique measure of maximal entropy of $S$ with $h_{\nu}(S) = \log \beta = h_{\mu}(T)$. The dynamics of both maps are largely determined by the {\em orbits} of the point 1, i.e., by the sets $\{ T^n 1 \, : \, n \ge 0 \}$ and $\{ S^n 1 \, : \, n \ge 0 \}$.

The focus of this article is on the existence of isomorphisms between $T$ and $S$. In general we call two dynamical systems $(X, \mathcal X, m_X, F)$ and $(Y, \mathcal Y, m_Y, G)$ (or the two transformations $F$ and $G$)  {\em (measurably) isomorphic} if there are sets $X' \in \mathcal X$ and $Y' \in \mathcal Y$ with $m_X(X')=1=m_Y(Y')$ and if there exists a map $\phi: X' \to Y'$ with the following properties:
\begin{itemize}
\item[(i)] $\phi$ is a measurable bijection,
\item[(ii)] $\phi$ is measure preserving, i.e., for all $A \in \mathcal Y$, $m_X(\phi^{-1}(A)) = m_Y (A)$,
\item[(iii)] $\phi$ preserves the dynamics, i.e., $\phi \circ F = G \circ \phi$.
\end{itemize}

As was mentioned in the introduction, we will prove Theorem~\ref{t:multinacci} by constructing a suitable Markov shift. A one-sided Markov shift on a finite alphabet is defined as follows. Let $\{ 1, 2, \ldots, n\}$ be the set of states or the alphabet and let $P=(p_{ij})_{1\le i,j \le n}$ be a stochastic matrix. Define the set
\[ Z = \big\{ (x_n)_{n \ge 1} \in \{1, \ldots, n\}^{\mathbb N}\, : \, p_{x_nx_{n+1}}>0 \, \text{ for all } n \ge 1 \big\}.\]
Let $\sigma:Z \to Z$ denote the left shift, i.e., $\sigma x =y$ with $y_n=x_{n+1}$ for all $n \ge 1$. The pair $(Z,\sigma)$ is called the {\em symbolic system} for the matrix $P$. Note that a symbolic system can also be given by a (0,1)-matrix instead of a stochastic matrix. Let $\mathcal F$ be the $\sigma$-algebra on $X$ generated by the cylinders, i.e., the sets obtained by specifying the values of the first $k$ elements of the sequence, $k \ge 1$. Let $q = (q_i)_{1 \le i \le n}$ be a positive probability vector such that $qP=q$. The Markov measure $m_P$ associated to $P$ is defined on the cylinders by
\[ m_P \big( \{ x \in Z \, | \, x_1=a_1, \ldots, x_k=a_k \}  \big) = q_{a_1} p_{a_1a_2} \cdots p_{a_{k-1}a_k}.\]
The system $(Z, \mathcal F, m_P, \sigma)$ is called a one-sided {\em Markov shift}.

\section{The multinacci numbers}\label{s:multinacci}
In this section we show that the positive and negative $\beta$-transformations are measurably isomorphic when $\beta$ is a multinacci number. Multinacci numbers are specific Pisot numbers. A {\em Pisot number} is a real algebraic integer bigger than 1 with all its Galois conjugates less than 1 in modulus. For $n \ge 2$, the $n$-th {\em multinacci number} $\beta_n$ is the Pisot number with minimal polynomial
\begin{equation}\label{q:polynomial}
p_n(x)=x^n-x^{n-1}-\cdots - x-1.
\end{equation}
Then $\beta_2$ is the golden ratio and $\beta_3$ is usually referred to as the tribonacci number. It is well-known that the sequence $\{ \beta_n \}_{n \ge 2}$ is strictly increasing and $\lim_{n \to \infty} \beta_n=2$. We use the multinacci numbers to give a detailed description of the dynamical behaviour of the maps $T$ and $S$.

For the results in this section, we need results from \cite{BB85} on piecewise linear Markov maps. A transformation $F:[0,1] \to [0,1]$ is called a {\em piecewise linear Markov map} if there exists points $0 = a_0 < a_1 < \cdots < a_n=1$ such that for all $1 \le i \le n$, $F$ is a linear homeomorphism on the interval $(a_{i-1}, a_i)$ with
\[ \bigcup_{\ell=j}^{k-1} (a_{\ell},a_{\ell+1}) \subseteq F (a_{i-1}, a_i) \subseteq [a_j,a_k] \quad \text{for some }\, 0\le j <k \le 1.\]
The partition $\{ (0, a_1),(a_1, a_2), \ldots, (a_{n-1},1) \}$ of the interval $[0,1]$ given by $a_0, \ldots, a_n$ is called a {\em Markov partition} for $F$. Note that when we say {\em partition}, we mean a finite collections of sets with positive $\lambda$-measure that are pairwise disjoint and cover the whole interval $[0,1]$ upto a set of $\lambda$-measure zero.

The map $F$ is associated to a (0,1)-matrix $M$ that says which transitions between intervals $(a_{i-1},a_i)$ are possible. To obtain $M$, we label these intervals with a number: $(a_{i-1},a_i)=E_j$ for some $1 \le j \le n$. (We could of course simply take $j=i$, but later we want to have the opportunity to choose $i \neq j$). Then $M_{jk} =1$ if $E_k \subseteq F E_j$ and $M_{jk}=0$ otherwise. Let $(Z, \sigma)$ be the symbolic system associated to $M$. Below are some results from \cite{BB85} adapted to our setting.
\begin{itemize}
\item[(r1)] (Proposition 1 of \cite{BB85}) Suppose $M$ is an irreducible $n \times n$ (0,1)-matrix with spectral radius $\beta >1$ and of which the 1's in each row are not separated by 0's. Suppose that $T=T_{\beta}$ (or $S=S_{\beta}$) is a piecewise linear Markov map with matrix $M$. Then there is a countable set $C \subset [0,1]$ and a homeomorphism $\phi: Z \to [0,1]\backslash C$ such that $\phi \circ \sigma = T \circ \phi$ (or $S \circ \phi$).
\item[(r2)] (Proposition 2 of \cite{BB85}) Under the same conditions as in (r1), the symbolic system $(Z, \sigma)$ given by $M$ admits a unique maximal Markov measure $m$.
\item[(r3)] (Theorem 2 of \cite{BB85}) Under the same conditions as in (r1), the measure $p = m \circ \phi^{-1}$ is the unique probability measure for $T$ (or $S$) that is invariant for $T$ (or $S$), maximizes entropy and is equivalent to $\lambda$.
\end{itemize}

We will show that for $\beta = \beta_n$ both $T$ and $S$ are piecewise linear Markov maps with the same (0,1)-matrix $M$. Since $\mu$ and $\nu$ are the unique measures of maximal entropy for $T$ and $S$ respectively, Theorem 2 from \cite{BB85} then gives that $T$ and $S$ are isomorphic, if $M$ satisfies the conditions from (r1). As an illustration we first show how this works for $\beta_2$.

\subsection{The golden ratio}
In Figure~\ref{f:goldenmean} we see both maps $T$ and $S$ for $\beta_2=\frac{1+\sqrt 5}{2}$.
\begin{figure}[ht]
\centering{\includegraphics{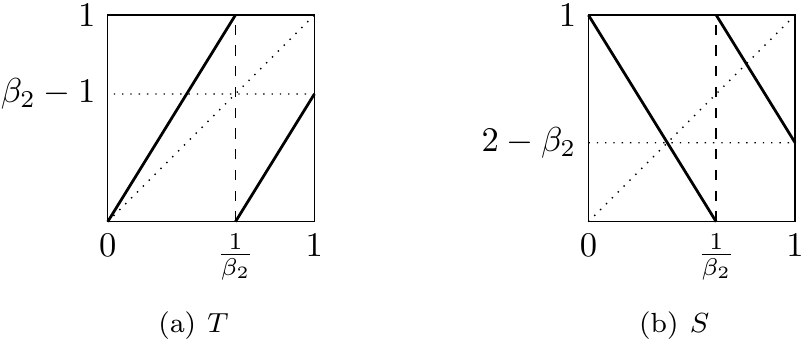}}
\vskip -.3cm
\caption{The maps $T$ and $S$ for the golden ratio $\beta_2 = \frac{1+\sqrt 5}{2}$.}
\label{f:goldenmean}
\end{figure}
Note that $T1=\beta_2-1=\frac{1}{\beta_2}$. Set $E_1 = \big(0, \frac{1}{\beta_2} \big)$ and $E_2 = \big( \frac{1}{\beta_2},1\big)$. Then $T E_1 = (0,1)$ and $T E_2 = E_1$. Hence, $T$ is a piecewise linear Markov map on the Markov partition $\{E_1,E_2\}$ with matrix
\[ M = \left(
\begin{array}{cc}
1&1\\
1&0
\end{array}
\right).\]

Let $(Z, \sigma)$ denote the symbolic system given by the matrix $M$. Note that $M$ is irreducible and that the 1's in each row are touching. Moreover, the characteristic polynomial of $M$ is equal to the minimal polynomial of $\beta_2$. Hence, the spectral radius of $M$ is $\beta_2$ and $T$ and $M$ satisfy all the conditions of (r1). So, $([0,1], \mathcal B, \mu, T)$ and $(Z, \mathcal F, m, \sigma)$ are isomorphic.

For $S$, note that $S \frac{1}{\beta_2^2} = 1- \frac{1}{\beta_2} =\frac{1}{\beta_2^2}$ and that
\[ S \Big(0, \frac{1}{\beta_2^2}\Big) = \Big(\frac{1}{\beta_2^2},1\Big) \quad \text{and} \quad S \Big(\frac{1}{\beta_2^2},1\Big) = \Big(0, \frac{1}{\beta_2^2} \Big) \cup \Big( \frac{1}{\beta^2_2}, 1\Big].\]
Hence, if we set $E_1=\big(\frac{1}{\beta_2^2},1\big)$ and $E_2=\big(0, \frac{1}{\beta_2^2}\big)$, then we see that $M$ is also the appropriate (0,1)-matrix for $S$. Hence, $([0,1], \mathcal B, \nu, S)$ is also isomorphic to $(Z, \mathcal F, m, \sigma)$ and therefore to $([0,1], \mathcal B, \mu, T)$.

\subsection{The maps $T$ and $S$ for $\beta_n$}
The map $T$ behaves very regularly for each multinacci number $\beta_n$. Note that for each $1 \le k \le n-2$,
\[ T^k 1 = \beta_n^k - \beta_n^{k-1} - \cdots - \beta_n -1 = \frac{1}{\beta_n} + \frac{1}{\beta_n^2} + \cdots + \frac{1}{\beta_n^{n-k}} > \frac{1}{\beta_n},\]
and hence
\begin{equation}\label{q:markovT}
0 < \frac{1}{\beta_n}=T^{n-1}1 < T^{n-2}1 < \cdots < T1 < 1.
\end{equation}
The points in (\ref{q:markovT}) give a Markov partition for $T$. Set $E_1 = (0, T^{n-1}1)$ and for $2 \le j \le n-1$, $E_j = (T^{n-j-1}1, T^{n-j}1)$. Then $T$ is a piecewise linear Markov map with associated (0,1)-matrix
\begin{equation}\label{q:mt}
M= \left(
\begin{array}{cccccc}
1 & 1 & 1 & \cdots & 1 & 1\\
1 & 0 & 0 & \cdots & 0 & 0\\
0 & 1 & 0 & \cdots & 0 & 0\\
\vdots & \vdots & \vdots & \ddots & \vdots & \vdots\\
0 & 0 & 0 & \cdots & 1 & 0
\end{array} \right).
\end{equation}
In Figure~\ref{f:graphbn} we see the corresponding graph.
\begin{figure}[ht]
\centering{\includegraphics{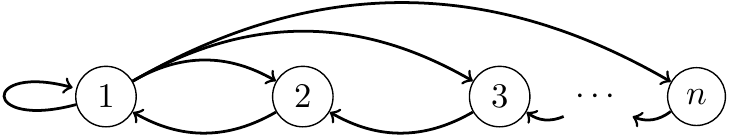}}
\caption{The graph corresponding to the dynamics of $T$ on the Markov partition from (\ref{q:markovT}).}
\label{f:graphbn}
\end{figure}

Let $(Z, \sigma)$ denote the symbolic system given by $M$. Note that $M$ is irreducible and that the 1's in each row are contiguous. Since the characteristic polynomial of $M$ is the minimal polynomial of $\beta_n$, the spectral radius of $M$ is $\beta_n$. Hence, all the requirements of (r1) are fulfilled and $([0,1], \mathcal B, \mu, T)$ and $(Z, \mathcal F, m, \sigma)$ are isomorphic.

\vskip .2cm
For the map $S$ we have to distinguish between even and odd $n$. Note that the small and the large fixed points of the map are given by $x_s = \frac{1}{\beta+1}$ and $x_{\ell}=\frac{2}{\beta+1}$ respectively. The following lemma on the orbit of 1 under $S$ is a key result in this paper.

\begin{lemma}\label{l:nbonacciS}
For all $n \ge 2$, if $\beta \ge \beta_n$ and $1 \le k \le n-2$, then 
\[ S^k 1 > x_{\ell} \,  \text{ if $k$ is even and } \, S^k 1 < x_s \, \text{ if $k$ is odd}.\]
Moreover,
\[ S^{n-1}1 \ge x_{\ell} \, \text{ if $n$ is odd and } \, S^{n-1}1 \le x_s \, \text{ if $n$ is even,}\]
with equality if and only if $\beta = \beta_n$.
\end{lemma}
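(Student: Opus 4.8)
Write $Sx = d(x)-\beta x$ with $d(x)=1$ on $[0,\tfrac1\beta)$ and $d(x)=2$ on $(\tfrac1\beta,1]$, and put $d_k:=d(S^{k-1}1)$, so that $S^k1 = d_k-\beta S^{k-1}1$ for $k\ge 1$ with $S^01=1$. Since $\beta>1$ we have the strict ordering $x_s=\tfrac1{\beta+1}<\tfrac1\beta<\tfrac2{\beta+1}=x_\ell$, hence knowing whether $S^{k-1}1$ lies above $x_\ell$ or below $x_s$ already determines the branch $d_k$ (and, being strict, makes it unambiguous even though $S$ is undefined at $\tfrac1\beta$). The plan is therefore a simultaneous induction on $k$ that at each stage both locates $S^k1$ relative to $x_s,x_\ell$ and keeps the algebra in a shape that feeds the next step. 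The algebraic engine is the one‑term recursion $p_{k+1}(x)=x\,p_k(x)-1$, valid for $k\ge 1$ once we set $p_1(x)=x-1$; this is immediate from \eqref{q:polynomial}.

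The induction hypothesis $H_k$ I would carry, for $0\le k\le n-2$, is
\[
S^k1-x_\ell=\frac{p_{k+1}(\beta)}{\beta+1}\ \text{ if $k$ is even},\qquad
S^k1-x_s=-\frac{p_{k+1}(\beta)}{\beta+1}\ \text{ if $k$ is odd},
\]
together with $p_{k+1}(\beta)>0$; the latter holds because $k+1\le n-1<n$ forces $\beta\ge\beta_n>\beta_{k+1}$, and $p_m(\beta)>0$ for every $\beta>\beta_m$. The strict sign then guarantees $S^k1>x_\ell$ (resp. $<x_s$), which unambiguously fixes $d_{k+1}$. The base case $k=0$ reads $1-x_\ell=\tfrac{\beta-1}{\beta+1}=\tfrac{p_1(\beta)}{\beta+1}$, which holds. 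For the step, if $k$ is even then $S^k1>x_\ell>\tfrac1\beta$, so $d_{k+1}=2$ and $S^{k+1}1=2-\beta S^k1$; substituting $S^k1=\tfrac{2+p_{k+1}(\beta)}{\beta+1}$ and using $p_{k+2}=\beta p_{k+1}-1$ gives $S^{k+1}1-x_s=\tfrac{1-\beta p_{k+1}(\beta)}{\beta+1}=-\tfrac{p_{k+2}(\beta)}{\beta+1}$. If $k$ is odd the computation is symmetric: $d_{k+1}=1$, $S^{k+1}1=1-\beta S^k1$, and one gets $S^{k+1}1-x_\ell=\tfrac{p_{k+2}(\beta)}{\beta+1}$. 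Thus $H_k$ holds for all $0\le k\le n-2$, which is exactly the first assertion of the lemma for $1\le k\le n-2$.

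For the final assertion one runs the same step once more from $k=n-2$ to $k=n-1$. If $n$ is odd then $n-2$ is odd, so $S^{n-2}1<x_s$, $d_{n-1}=1$, and $S^{n-1}1-x_\ell=\tfrac{p_n(\beta)}{\beta+1}$; if $n$ is even then $n-2$ is even, $S^{n-2}1>x_\ell$, $d_{n-1}=2$, and $S^{n-1}1-x_s=-\tfrac{p_n(\beta)}{\beta+1}$. Now $p_n$ is the minimal polynomial of the Pisot number $\beta_n$, so all roots of $p_n$ other than $\beta_n$ have modulus less than $1$; hence $\beta_n$ is the only root of $p_n$ in $(1,\infty)$, and since $p_n(1)=1-n<0<1=p_n(2)$ we get $p_n(\beta)\ge 0$ for $\beta\ge\beta_n$ with equality if and only if $\beta=\beta_n$. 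This yields $S^{n-1}1\ge x_\ell$ (resp. $\le x_s$) with equality precisely when $\beta=\beta_n$.

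The one genuine difficulty — and the reason the argument has to be an induction rather than a direct computation — is the apparent circularity: evaluating $S^k1$ requires knowing which branch of $S$ was applied at each earlier step, i.e. the sign of $S^j1-\tfrac1\beta$ for $j<k$, which is essentially the content of the lemma. The simultaneous induction above breaks this loop, and the reason it closes so cleanly is the lucky fact that the two branch substitutions both convert $p_{k+1}(\beta)$ into $p_{k+2}(\beta)$ via $p_{k+2}=\beta p_{k+1}-1$. The only external inputs are the monotonicity $\beta_2<\beta_3<\cdots$ and the elementary sign information about $p_m$ on $(1,2)$ just described; these are exactly what turn the hypothesis ``$\beta\ge\beta_n$'' into strict positivity of $p_{k+1}(\beta)$ for $k+1<n$ and into the boundary behaviour at $k+1=n$ (the case $n=2$ being the degenerate one in which there are no interior $k$ and only the base case plus one final step are needed).
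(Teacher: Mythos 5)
Your proof is correct and follows essentially the same strategy as the paper's: a simultaneous induction that carries an explicit algebraic expression for $S^k1$ alongside its position relative to $x_s$ and $x_\ell$ (which pins down the branch applied at the next step), and then reduces the positional claims to the sign of the multinacci polynomials $p_m$ on $[\beta_n,2)$. Your bookkeeping via the normalized quantity $(\beta+1)\bigl(S^k1-x_{\ell}\bigr)=p_{k+1}(\beta)$ (resp.\ $-p_{k+1}(\beta)$ for $x_s$) and the recursion $p_{k+2}=\beta p_{k+1}-1$ is a tidier repackaging of the paper's expanded alternating-coefficient formula, but the underlying argument is the same.
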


\begin{proof}
To prove the lemma, we also prove the following claim.\\
{\bf Claim:} If $\beta \ge \beta_n$, then for all $1 \le k \le n-1$,
\begin{eqnarray*}
S^k 1 &=& \beta^k -2\beta^{k-1}+\beta^{k-2}-2\beta^{k-3} + \cdots + \beta^2 -2\beta +1 \quad \text{if $k$ is even, and}\\
S^k 1 &=& -\beta^k +2\beta^{k-1}-\beta^{k-2}+2\beta^{k-3} - \cdots - \beta +2 \quad \text{if $k$ is odd}.
\end{eqnarray*}

\vskip .2cm
We prove all statements by induction. It is clear that for all $\beta >1$, $S1 = - \beta +2 < \frac{1}{\beta}$. So the claim is clear for $n=2$. Then
\[ S1 = -\beta +2 \le x_s =\frac{1}{\beta+1} \quad \Leftrightarrow \quad \beta^2 - \beta -1\ge 0.\]
Recall from (\ref{q:polynomial}) that $p_2(x) = x^2-x-1$ is the minimal polynomial of $\beta_2$. Since $\beta_2$ is a Pisot number, we know that $\beta_2$ is the only root of this polynomial in the interval $(1,2)$. Hence $S1 \le x_s$ if and only if $\beta \ge \beta_2$ and equality holds if and only if $\beta = \beta_2$. This proves the lemma for $n=2$.

\vskip .2cm
For $n=3$, let $\beta > \beta_2$. Then $S^2 1 = 1 - \beta (-\beta +2) = \beta^2 - 2\beta +1$ and thus the claim is true for $n=3$. To prove the statement of the lemma,
\[ S^2 1 = \beta^2-2\beta +1 \ge x_{\ell} =\frac{2}{\beta+1} \quad \Leftrightarrow \quad \beta^3 - \beta^2 - \beta -1 \ge 0.\]
Note that the left hand side of the last inequality is $p_3(\beta)$, the minimal polynomial of the Pisot number $\beta_3$. Hence $S^21 \ge x_{\ell}$ if and only if $\beta \ge \beta_3$ with equality if and only if $\beta = \beta_3$.

\vskip .2cm
Now suppose that the lemma and the claim are true for some $n \ge 2$ and let $\beta > \beta_n$. Recall that the sequence $\{ \beta_n\}$ is strictly increasing towards 2, so that we only need to prove the claim and the last statement of the lemma for $S^n1$.

If $n$ is even, then by the induction hypothesis we know that
\[ S^{n-1}1 = -\beta^{n-1} +2\beta^{n-2} - \beta^{n-3} + \cdots + 2\beta^2- \beta +2 < x_s.\]
Hence,
\[S^n 1 = 1 - \beta S^{n-1}1 = \beta^n - 2\beta^{n-1} + \beta^{n-2} - \cdots - 2\beta^3 + \beta^2 - 2\beta+1,\]
which proves the claim in this case. Moreover, $S^n 1 \ge x_{\ell} = \frac{2}{\beta+1}$ if and only if
\[ \beta^{n+1} - \beta^n - \beta^{n-1} - \cdots - \beta -1 \ge 0.\]
Since the left hand side is the minimal polynomial of the Pisot number $\beta_{n+1}$, we have that $S^n 0 \le x_s$ if and only if $\beta \ge \beta_{n+1}$ with equality if and only if $\beta=\beta_{n+1}$.
 
Similarly, if $n$ is odd, then the induction hypothesis gives
\[S^{n-1}1 =  \beta^{n-1} - 2\beta^{n-2} +\beta^{n-3} - \cdots - 2\beta^3 + \beta^2 -2\beta+1 > x_{\ell},\]
and thus,
\[ S^n 1 = 2 -\beta S^{n-1}1 = -\beta^n - 2\beta^{n-1} + \beta^{n-2} - \cdots -2\beta^2 + \beta -2.\]
This shows that the claim is correct. Then, $S^n 1 \le x_s = \frac{1}{\beta+1}$ if and only if
\[ \beta^{n+1} - \beta^n - \beta^{n-1} - \cdots - \beta -1 \ge 0,\]
which proves the lemma as above.
\end{proof}

\begin{remark}\label{r:orbit0}{\rm
The map $S$ is linear with slope $-\beta$ on the intervals $[0,x_s]$ and $[x_{\ell},1]$. Therefore, Lemma~\ref{l:nbonacciS} gives us even more information on the orbit of 1 under $S$. For $n \ge 2$ and $\beta_{n-1} < \beta < \beta_n$, the order of the points $S^k 1$, $0 \le k \le n-1$, is completely determined as shown in Figure~\ref{f:points}. Recall that if $\beta=\beta_n$, then $S^{n-1} 1 =x_s$ or $x_{\ell}$ depending on the parity of $n$.}
\begin{figure}[ht]
\centering{\includegraphics{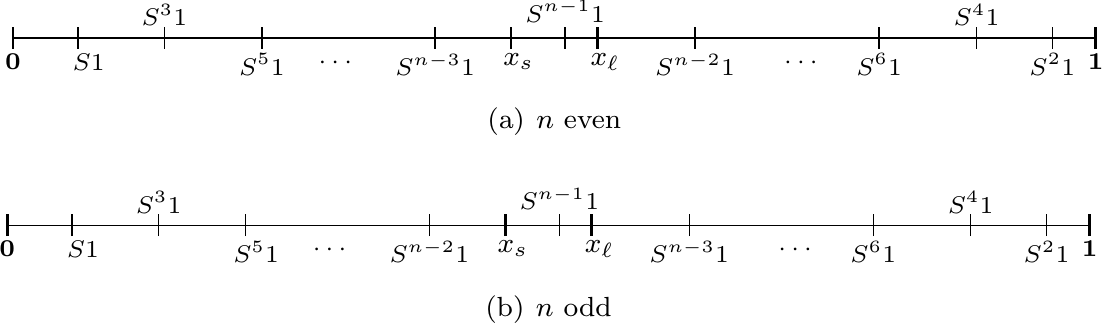}}
\vskip -.3cm
\caption{The relative position of the points $S^k 1$ for $\beta_{n-1} < \beta < \beta_n$.}
\label{f:points}
\end{figure}
\end{remark}

Remark~\ref{r:orbit0} and Figure~\ref{f:points} give us the Markov partition for $S$ when $\beta = \beta_n$. If $n$ is even, then $S^{n-1}1=x_s$ and $S$ is a piecewise linear Markov map with partition
\[ 0 < S1 < S^31 < \cdots < S^{n-3} 1 < S^{n-1}1=x_s < S^{n-2}1 < S^{n-4}1 < S^2 1 <1.\]
We label the partition elements as follows. Set $E_1=\big( x_s, S^{n-2}1 \big)$. Note that
\[ SE_1 = S \Big( \big( x_s, \frac{1}{\beta_n} \big) \cup \big[ \frac{1}{\beta_n}, S^{n-2}1 \big)  \Big) = \big(0, x_s \big) \cup \big( x_s,1\big].\]
Hence, all other states are mapped into $E_1$. Let $E_2 = \big( S^{n-3} 1,x_s \big)$ and note that then $SE_2=E_1$. Next set $E_3 = \big( S^{n-2}1, S^{n-4}1\big)$, so that $SE_3 = E_2$, and continue in this manner until we have $E_n = \big( 0,S1\big)$. Then $M$ from (\ref{q:mt}) is the (0,1)-matrix corresponding to $S$.

\vskip .2cm

If $n$ is odd, then the Markov partition is given by
\[ 0 < S1 < S^31 < \cdots < S^{n-2}1 < S^{n-1}1=x_{\ell} < S^{n-3}1 < S^{n-5}1 < \cdots < S^2 1 <1.\]
The labeling of the state goes in the same way as for the even case with $E_1=\big( S^{n-2}1, x_{\ell} \big)$. Then again $M$ from (\ref{q:mt}) is the (0,1)-matrix corresponding to the piecewise linear Markov map $S$. Since $M$ satisfies all the conditions from (r1), we have the following theorem.

\begin{theorem}\label{t:multinacci}
For $n \ge 2$, let $\beta_n$ denote the $n$-th multinacci number, i.e., the Pisot number with minimal polynomial $x^n-x^{n-1}-\cdots -x-1$. Then the positive $\beta_n$-transformation $T$ and the negative $\beta_n$-transformation $S$ are measurably isomorphic.
\end{theorem}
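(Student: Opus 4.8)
The plan is to show that, for $\beta=\beta_n$, both $T$ and $S$ are piecewise linear Markov maps realising the \emph{same} $(0,1)$-matrix $M$ from (\ref{q:mt}), and then to extract the isomorphism from the results (r1)--(r3) quoted from \cite{BB85} together with the uniqueness of the measures of maximal entropy recorded in Section~2. The underlying idea is that a single symbolic system $(Z,\mathcal F,m,\sigma)$ will serve as a common model for $([0,1],\mathcal B,\mu,T)$ and $([0,1],\mathcal B,\nu,S)$.

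First I would treat $T$, which is the easy side. Using that $\beta_n$ has minimal polynomial $p_n$, one computes $T^k1=\beta_n^{-1}+\cdots+\beta_n^{-(n-k)}$ for $1\le k\le n-2$, giving the chain (\ref{q:markovT}); hence the points $0,\ T^{n-1}1,\ T^{n-2}1,\ \dots,\ T1,\ 1$ form a Markov partition. Labelling $E_1=(0,T^{n-1}1)$ and $E_j=(T^{n-j-1}1,T^{n-j}1)$ for $2\le j\le n-1$, the associated transition matrix is exactly $M$ of (\ref{q:mt}). Next I would handle $S$, where the orbit of $1$ is governed by Lemma~\ref{l:nbonacciS} and Remark~\ref{r:orbit0}: the points $S^k1$, $0\le k\le n-1$, interlace around the two fixed points $x_s$ and $x_\ell$, with $S^{n-1}1$ landing exactly on $x_s$ (if $n$ is even) or on $x_\ell$ (if $n$ is odd). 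I would split into the two parities. In each case the orbit points together with $0$ and $1$ give a Markov partition, and the labelling has to be chosen carefully — starting from $E_1=(x_s,S^{n-2}1)$ when $n$ is even, resp. $E_1=(S^{n-2}1,x_\ell)$ when $n$ is odd — so that $SE_1=(0,x_s)\cup(x_s,1]$ surjects onto the whole interval while $SE_{j+1}=E_j$ for the remaining states. This again produces precisely the matrix $M$ of (\ref{q:mt}).

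Finally I would check that $M$ meets the hypotheses of (r1): it is irreducible, the $1$'s in each row are contiguous, and its characteristic polynomial is $p_n$, so its spectral radius is $\beta_n>1$. Then by (r1)--(r2) there is a homeomorphism $\phi_T:Z\to[0,1]\setminus C_T$ with $\phi_T\circ\sigma=T\circ\phi_T$ for a countable set $C_T$, and $(Z,\mathcal F,\sigma)$ carries a unique maximal Markov measure $m$; by (r3) the pushforward $m\circ\phi_T^{-1}$ is the unique $T$-invariant probability measure equivalent to $\lambda$ that maximises entropy, which by Section~2 is $\mu$. The identical argument applied to $S$ (using the partition just built) gives a homeomorphism $\phi_S:Z\to[0,1]\setminus C_S$ with $\phi_S\circ\sigma=S\circ\phi_S$ and $m\circ\phi_S^{-1}=\nu$. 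Hence $\phi_S\circ\phi_T^{-1}$, defined off the countable (thus $\mu$-null and $\nu$-null) exceptional sets $C_T$, $C_S$ and their images, is a measurable bijection satisfying $(\phi_S\circ\phi_T^{-1})\circ T=S\circ(\phi_S\circ\phi_T^{-1})$ and carrying $\mu$ to $\nu$; that is exactly a measurable isomorphism between $T$ and $S$.

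I expect the main obstacle to be the $S$-side combinatorics: one must verify not merely that the points $S^k1$ yield a Markov partition, but that, after the right relabelling of its elements, the transition matrix is \emph{exactly} $M$ rather than some other matrix sharing the same spectral radius — only then do both maps model the \emph{same} symbolic system and the argument closes. This is why Lemma~\ref{l:nbonacciS}, which pins down the precise order of the orbit of $1$ (including the fact that $S^{n-1}1\in\{x_s,x_\ell\}$ exactly when $\beta=\beta_n$), is the crux of the whole theorem; everything after it is a matter of quoting (r1)--(r3) and invoking the already-known identification of $\mu$ and $\nu$ as the unique measures of maximal entropy.
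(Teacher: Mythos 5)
Your proposal is correct and follows essentially the same route as the paper: establish that for $\beta=\beta_n$ both $T$ and $S$ are piecewise linear Markov maps realising the matrix $M$ of (\ref{q:mt}) (using (\ref{q:markovT}) for $T$ and Lemma~\ref{l:nbonacciS} with the parity-dependent labelling starting from $E_1=(x_s,S^{n-2}1)$ or $E_1=(S^{n-2}1,x_\ell)$ for $S$), then invoke (r1)--(r3) and the uniqueness of the measures of maximal entropy to identify both systems with the same Markov shift. You have also correctly identified the crux — verifying that the $S$-partition yields \emph{exactly} the matrix $M$ — which is precisely where the paper invests its effort.
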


\section{Non-isomorphic transformations}\label{s:noniso}
Next we show that for all non-multinacci numbers $\beta$ the transformations $T$ and $S$ are {\bf not} isomorphic. We do this by studying the iterates of the maps $T$ and $S$. Recall that if $\phi$ is an isomorphism, then $\phi \circ T = S \circ \phi$ and thus $\phi \circ T^n = S^n \circ \phi$  for all $n \ge 1$. Hence, if we can show that for some iterate $T^n$ there is an interval (or set of positive $\mu$-measure), such that each point in this interval has $k$ preimages under $T^n$ and that the set of points with $k$ preimages under the map $S^n$ has $\nu$-measure zero, then $T$ and $S$ cannot be isomorphic. We make this more precise.

\vskip .2cm

For $n \ge 1$ define the sequences of maps $\psi^+_n, \psi^-_n : [0,1] \to \mathbb N$ by
\[ \psi^+_n (x) = \# \{ y \in [0,1] \, : \, T^n y =x \} \quad \text{and} \quad \psi^-_n(x) = \# \{ y \in [0,1] \, : \, S^n y=x \}. \]
These maps count the number of preimages that points have under the iterates of $T$ and $S$. For $k,n \ge 1$ define the sets
\[ I_n^+(k) = \{ x \in [0,1] \, : \, \psi_n^+(x)=k\} \quad \text{and} \quad I_n^-(k) = \{ x \in [0,1] \, : \, \psi_n^-(x)=k\}.\]
Note that the range of the functions $\psi_n^+$ and $\psi_n^-$ and the shapes of the sets $I_n^+(k)$ and $I_n^-(k)$ depend on the images of the intervals of monotonicity of $T^n$ and $S^n$ respectively. We call a non-empty, open interval $I$ an {\em interval of monotonicity} for $T^n$ if $T^n\big|_I$ is continuous and linear and if there is no bigger open interval $J \supset I$ with the same property. A similar definition holds for $S$.

Let $I$ be an interval of monotonicity of $T^n$. Then clearly $T^n I =(0, T^i 1)$ for some $0 \le i \le n$. Hence, the sets $I_n^+(k)$ are either empty or of the form $[T^i 1, T^j 1)$ for some $0 \le i, j \le n$, $i \neq j$. If $I$ is an interval of monotonicity for the map $S^n$, then $S^n I = (0, S^i 1)$ or $S^n I = (S^i1,S^j1)$ for some $0 \le i, j \le n$, $i \neq j$. Thus, the sets $I_n^-(k)$ are either empty or they contain an interval of the form $(S^i 1, S^j 1)$. If $\phi$ is an isomorphism from $T$ to $S$, then $\phi (I_n^+(k)) = I_n^-(k)$ and thus $\nu(I_n^-(k)) = \mu(I_n^+(k))$ for all $n,k \ge 1$. What we will do is show that if $\beta_n < \beta < \beta_{n+1}$, then there is a $k \ge 1$, such that $\mu(I_n^+(k)) \neq \nu(I_n^-(k))$. We do this by showing that one of the sets has measure zero, while the other has positive measure. To illustrate the general approach, we first consider $1 < \beta < \beta_2$ and $\beta_2 < \beta < \beta_3$.

\begin{figure}[ht]
\centering{\includegraphics{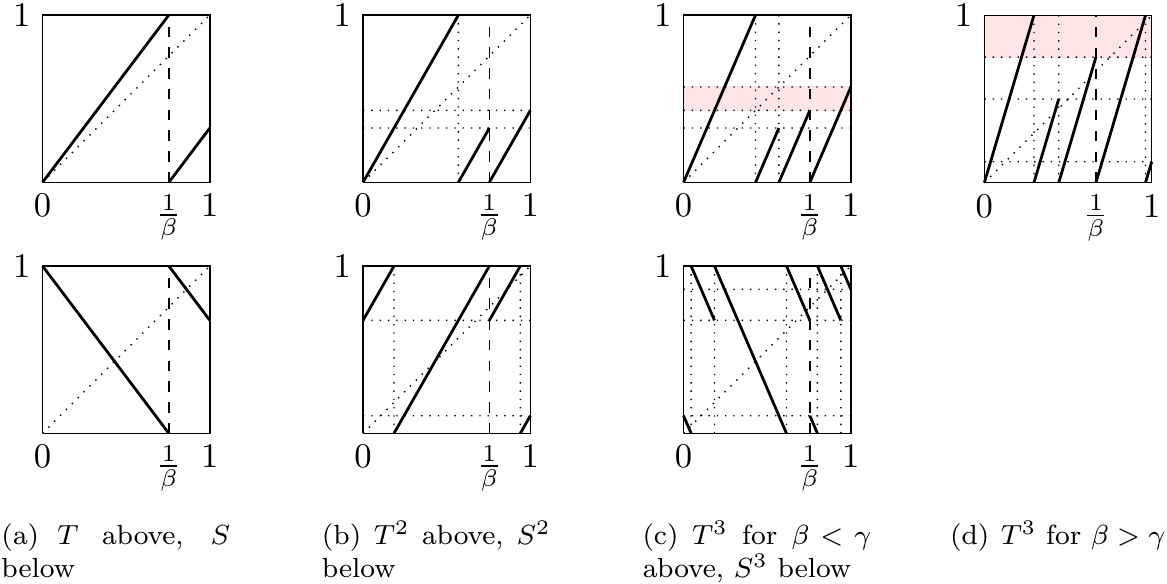}}
\caption{The first three iterates of $T$ and $S$ for some $1< \beta < \beta^2$. The coloured areas in (c) and (d) indicate the sets $I^+_3(2)$.}
\label{f:smallbetas}
\end{figure}

\subsection{The smallest $\beta$'s}
Let $1< \beta < \beta_2$. Then $T1=\beta -1 < \frac{1}{\beta}$. Moreover $T^2 1 =\beta^2-\beta < \frac{1}{\beta}$ if and only if $\beta < \gamma$, where $\gamma$ is the real root of the polynomial $x^3-x^2-1$. We see the first three iterates of the map $T$ in the top row of Figure~\ref{f:smallbetas}. Note that for $\beta < \gamma$, $I_3^+(2) = [T^21,T^31)$ and for $\beta \ge \gamma$, $I_3^+(2) = [T^21,1)$. Hence, $\mu(I^+_3(2))>0$ for all $1< \beta < \beta_2$.

Recall that $S$ has two fixed points and that $x_s=\frac{1}{\beta+1}$ is the smaller of the two. If $\beta < \beta_2$, then $x_s < S1 =2-\beta < \frac{1}{\beta}$. The first three iterates of $S$ are shown in the bottom row of Figure~\ref{f:smallbetas}. From Figure~\ref{f:smallbetas} we see that for all $1 < \beta < \beta_2$, $\nu(I^-_3(2)) = 0$. Hence, $\mu(I^+_3(2)) \neq \nu(I^-_3(2))$ and the maps $T$ and $S$ cannot be isomorphic.

\subsection{$\beta$'s between the golden ratio and the tribonacci number}
If $\beta_2 < \beta < \beta_3$, then $T1 = \beta-1>\frac{1}{\beta}$, $T^21 < \frac{1}{\beta}$ and $T^2 1 < T^3 1 < 1$. This allows us to calculate the range of $\psi_n^+$ for $n=1,2,3$. For $S$, Lemma~\ref{l:nbonacciS} implies that $S1=2-\beta < x_s$ and $x_s < S^2 1 < x_{\ell}$. To accurately describe the first three iterates of $S$, we have to know if $S^2 1 = 1-2\beta+\beta^2 > \frac{1}{\beta}$ or not. Therefore, let $\eta$ denote the real root of the polynomial $x^3-2x^2+x-1$. Then $S^2 1 < \frac{1}{\beta}$ if $\beta_2< \beta <\eta$ and $S^2 1 \ge \frac{1}{\beta}$ if $\eta \le \beta < \beta_3$. This allows us to calculate $\psi_n^-(x)$ for $n=1,2,3$ and all $x \in [0,1)$. In Figure~\ref{f:biggerbetas} we show the graphs of $T^3$ and $S^3$. There we see that for all $\beta_2 < \beta < \beta_3$, $\mu(I_3^+(3))>0$ and $\mu(I_3^+(6))>0$. These are the coloured areas in the top line of Figure~\ref{f:biggerbetas}. On the other hand, for $S^3$ we have $\nu(I_3^-(6)) =0$ if $\beta \le \eta$ (see Figure~\ref{f:biggerbetas}(d) and (e)) and $\nu(I_3^-(3)) =0$ if $\beta \ge \eta$ (see Figure~\ref{f:biggerbetas}(e) and (f)). Hence, $T$ and $S$ cannot be isomorphic for these $\beta$'s.

\begin{figure}[ht]
\centering{\includegraphics{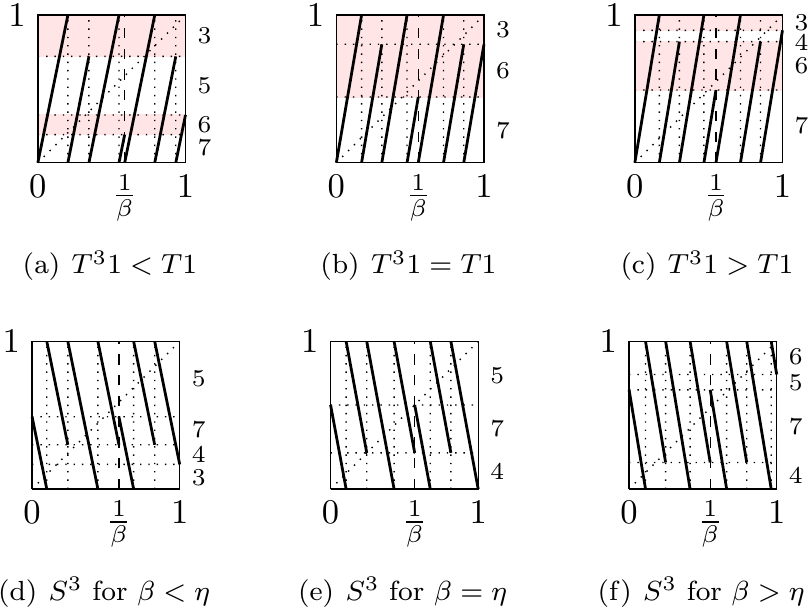}}
\caption{The transformations $T^3$ and $S^3$ for $\beta_2 < \beta < \beta_3$. The numbers to the right of the graphs give the values of the functions $\psi_3^+$ and $\psi_3^-$ on the corresponding intervals. The red areas indicate the intervals where points have three or six preimages for $T^3$.}
\label{f:biggerbetas}
\end{figure}

\subsection{Between $\beta_{n-1}$ and $\beta_n$}
To prove that $T$ and $S$ cannot be isomorphic if $\beta_{n-1}< \beta < \beta_n$, we follow the approach from the previous section. We will look at the ranges of $\psi^+_n$ and $\psi^-_n$ and therefore we study $T^n$ and $S^n$. We begin with $T$.

\subsubsection{The positive $\beta$-transformation between $\beta_{n-1}$ and $\beta_n$}
For $\beta_{n-1} < \beta < \beta_n$, $n \ge 4$, we know that
\begin{equation}\label{q:pointsT}
0< T^{n-1} 1 < \frac{1}{\beta} < T^{n-2} 1 < T^{n-3} 1 < \cdots < T1 < 1
\end{equation}
and $T^{n-1}1 < T^n 1 < 1$. We are interested in the range of $\psi_n^+$. To determine this exactly, we will take a closer look at the images of the (open) intervals of monotonicity of the iterates of $T$ and we will assign a type to each of them. The map $T$ has two intervals of monotonicity: $\big(0, \frac{1}{\beta} \big)$ and $\big( \frac{1}{\beta},1\big)$. The images of these intervals are $(0,1)$ and $(0, T1)$, hence there are two different types of images and both types occur exactly once for $T$. We call an interval of monotonicity with image $(0,1)$ type 0 and the one with image $(0,T1)$ type 1. This gives that
\[ \phi^+_1 (x) = \left\{
\begin{array}{ll}
2, & \text{if } x \in (0, T1) \subset I^+_1(2),\\
1, & \text{if } x \in (T1,1) \subset I^+_1(1).
\end{array}
\right.\]
Since we are only interested in the $\mu$-measure of sets, it is enough to consider open intervals here. $T^2$ has four intervals of monotonicity, but only three different types:
\[ \begin{array}{ll}
\big( 0, \frac{1}{\beta^2} \big) \, \text{with } T^2 \big( 0, \frac{1}{\beta^2} \big) = (0,1), &  \big( \frac{1}{\beta}, \frac{1}{\beta} + \frac{1}{\beta^2} \big) \, \text{with } T^2 \big( \frac{1}{\beta}, \frac{1}{\beta} + \frac{1}{\beta^2} \big) = (0,1),\\
\\
\big( \frac{1}{\beta^2}, \frac{1}{\beta} \big) \, \text{with } T^2 \big( \frac{1}{\beta^2}, \frac{1}{\beta} \big) = (0, T1), &  \big( \frac{1}{\beta}+\frac{1}{\beta^2}, 1 \big) \, \text{with } T^2 \big( \frac{1}{\beta}+\frac{1}{\beta^2}, 1 \big) = (0, T^21).
\end{array}\]
There are two times type 0, one type 1 and one new type, $\big(0, T^2 1\big)$, which we call type 2. Moreover, we see that a type 0 interval splits into a type 0 and a type 1 for the next iteration and a type 1 interval splits in a type 0 and a type 2. This gives
\[ \psi^+_2(x) = \left\{
\begin{array}{ll}
4, & \text{if } x \in (0, T^21) \subset I^+_2(4),\\
3, & \text{if } x \in (T^2 1, T1) \subset I^+_2(3),\\
2, & \text{if } x \in (T1,1) \subset I^+_2(2).
\end{array}
\right.\]
Going one step further, we see that for $T^3$ the type 2 interval splits into a type 0 interval and a type 3 interval with image $\big(0, T^3 1\big)$. Since $\frac{1}{\beta} < T^{n-2} 1 < T^{n-3} 1 < \cdots < T1 < 1$ this pattern continues up to $T^{n-1}$ and we can recursively determine the numbers with which each of the types of intervals occur for $T^{n-1}$. Let $\kappa_j(m)$ denote the number of type $j$ intervals that $T^m$ has. We have the following lemma.

\begin{lemma}\label{l:kappas}
For all $n \ge 4$, $1 \le m \le n-1$ and $0 \le j \le m-1$, $\kappa_j(m)=2^{m-1-j}$ and $\kappa_m(m)=1$. For all other values of $j$, $\kappa_j(m)=0$. Moreover, $\kappa_0(n) = 2^{n-1}-1$, $\kappa_n(n)=1$ and for $1 \le j \le n-1$, $\kappa_j(n)=2^{n-1-j}$.
\end{lemma}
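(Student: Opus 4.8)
The plan is to run the recursive ``type'' bookkeeping already indicated above: I would first pin down exactly how an interval of monotonicity of $T^m$ of type $i$ is subdivided when passing to $T^{m+1}$, and then induct on $m$ to read off the numbers $\kappa_j(m)$.

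\textbf{Splitting rules.} Writing $T^{m+1}=T\circ T^m$ and noting that the breakpoints of $T^m$ form a subset of those of $T^{m+1}$, each interval of monotonicity of $T^{m+1}$ is contained in a unique interval of monotonicity of $T^m$; moreover, if $I$ is such an interval for $T^m$ of type $i$ (so $T^m|_I$ is a linear homeomorphism onto $(0,T^i1)$), then the intervals of monotonicity of $T^{m+1}$ inside $I$ are precisely the $(T^m|_I)^{-1}$-preimages of the intervals of monotonicity of $T$ inside $(0,T^i1)$. Since $\frac{1}{\beta}$ is the only breakpoint of $T$, this depends only on whether $\frac{1}{\beta}\in(0,T^i1)$, and by (\ref{q:pointsT}) one has $T^i1>\frac{1}{\beta}$ for $0\le i\le n-2$ while $T^{n-1}1<\frac{1}{\beta}$. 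Hence a type-$i$ interval with $0\le i\le n-2$ splits into a type-$0$ interval (the piece sent by $T^m|_I$ onto $(0,\frac{1}{\beta})$, which $T$ then maps onto $(0,1)$) together with a type-$(i+1)$ interval (the piece sent onto $(\frac{1}{\beta},T^i1)$, which $T$ maps onto $(0,T^{i+1}1)$); whereas a type-$(n-1)$ interval is not subdivided and simply becomes a single type-$n$ interval, since $(0,T^{n-1}1)\subseteq(0,\frac{1}{\beta})$ and $T(0,T^{n-1}1)=(0,T^n1)$. In particular every interval of monotonicity of $T^m$ has a well-defined type $i$ with $0\le i\le m$.

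\textbf{Induction for $1\le m\le n-1$.} For $m=1$ the two intervals $(0,\frac{1}{\beta})$ and $(\frac{1}{\beta},1)$ are of types $0$ and $1$, so $\kappa_0(1)=\kappa_1(1)=1$ and $\kappa_j(1)=0$ otherwise, matching the formula. Assuming it for some $m$ with $1\le m\le n-2$, all types occurring for $T^m$ lie in $\{0,\dots,m\}\subseteq\{0,\dots,n-2\}$, so each of its $\sum_j\kappa_j(m)=2^{m-1}+\cdots+2^0+1=2^m$ intervals splits into one type-$0$ child and one type-$(j+1)$ child; counting children by type gives $\kappa_0(m+1)=\sum_j\kappa_j(m)=2^m$ and $\kappa_j(m+1)=\kappa_{j-1}(m)$ for $1\le j\le m+1$, with all other values $0$. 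Substituting the induction hypothesis yields $\kappa_j(m+1)=2^{(m+1)-1-j}$ for $1\le j\le m$ and $\kappa_{m+1}(m+1)=\kappa_m(m)=1$, which is the formula at level $m+1$; this carries the induction up to $m=n-1$.

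\textbf{The step $m=n$.} Here $T^{n-1}$ has $\kappa_j(n-1)=2^{n-2-j}$ for $0\le j\le n-2$ and $\kappa_{n-1}(n-1)=1$. The types $0,\dots,n-2$ each split into a type-$0$ and a type-$(j+1)$ interval, while the single type-$(n-1)$ interval becomes one type-$n$ interval and produces no type-$0$ child. Hence $\kappa_0(n)=\sum_{j=0}^{n-2}\kappa_j(n-1)=2^{n-2}+2^{n-3}+\cdots+1=2^{n-1}-1$, $\kappa_j(n)=\kappa_{j-1}(n-1)=2^{n-1-j}$ for $1\le j\le n-1$ (the case $j=n-1$ giving $\kappa_{n-2}(n-1)=1$), and $\kappa_n(n)=\kappa_{n-1}(n-1)=1$, as claimed. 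The only genuinely delicate point is the setup of the splitting rules — verifying that the image of each interval of monotonicity of $T^m$ is exactly an interval $(0,T^i1)$, so that ``type'' is meaningful, and invoking (\ref{q:pointsT}) to locate the endpoint $T^i1$ relative to $\frac{1}{\beta}$; after that the counting is a routine double induction.
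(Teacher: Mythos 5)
Your proposal is correct and follows essentially the same route as the paper: the same splitting rules (each type-$j$ interval with $j\le n-2$ producing one type-$0$ and one type-$(j+1)$ child, the type-$(n-1)$ interval not subdividing because $T^{n-1}1<\frac{1}{\beta}$), the same recursions $\kappa_0(m+1)=\sum_j\kappa_j(m)$ and $\kappa_j(m+1)=\kappa_{j-1}(m)$, and the same induction on $m$ with the separate final step at $m=n$. Your write-up is somewhat more explicit than the paper's in justifying why the type of an interval determines its subdivision, but the argument is the same.
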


\begin{proof}
Fix some $n \ge 4$. First note that for $1 \le m \le n-1$, $T^m$ can only have intervals of type $0, 1, \ldots, m$, which proves the middle part of the lemma. We prove the first statement by induction on $m$. We have already proved the statement for $m=1,2$. Now suppose that the statement is true for some $m <n-1$. Each type $j$ interval, $0 \le j \le m$, has image $(0, T^j 1)$ under $T^m$. Hence, by (\ref{q:pointsT}) each type $j$ interval splits into a type 0 interval and a type $(j+1)$ interval for $T^{m+1}$. Thus,
\[ \kappa_0(m+1) = \sum_{j=0}^m \kappa_j(m) = 1+ \sum_{j=0}^{m-1} 2^j = 2^m.\]
Moreover, $\kappa_j(m+1) = \kappa_{j-1} (m)$ for all $1 \le j \le m$ and the first statement of the lemma is proved.

For the last part, first note that $T^{n-1}1 < \frac{1}{\beta}$ implies that the type $(n-1)$ interval of $T^{n-1}$ does not split into two intervals of monotonicity for $T^n$. Hence, the type $(n-1)$ interval only produces a type $n$ interval for $T^n$. This implies that
\[ \kappa_0(n) = \sum_{j=0}^{n-2} \kappa_j(n-1) = \sum_{j=0}^{n-2} 2^j = 2^{n-1}-1.\]
For all $1 \le j \le n$ we still have $\kappa_j(n)=\kappa_{j-1}(n-1)$, which gives the lemma.
\end{proof}

The points $T^k 1$ partition the interval $[0,1]$ into intervals on which $\psi_n^+$ is constant. By adding the appropriate numbers $\kappa_i(n)$ we get the values of $\psi^+_n$. Note that $\psi^+_n$ is a monotone decreasing function on $[0,1)$. The next lemma gives some information about the range of $\psi^+_n$.

\begin{figure}[ht]
\centering{\includegraphics{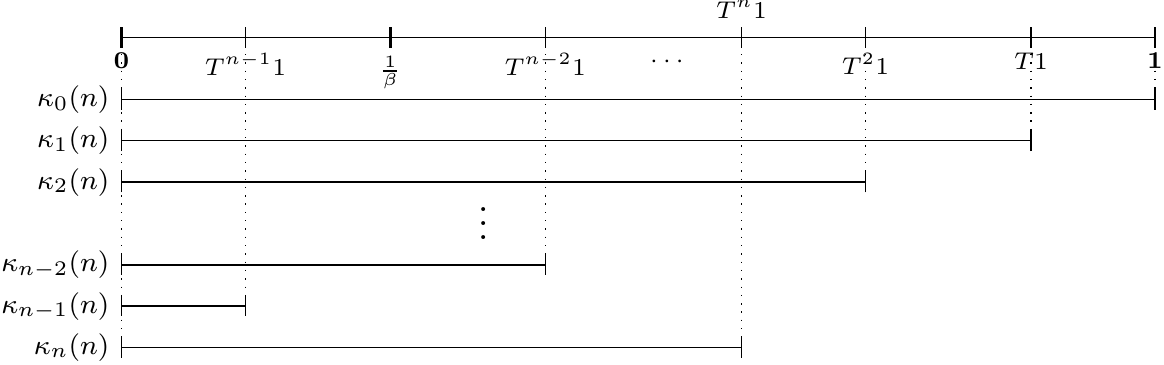}}
\caption{The images of the different types of intervals of monotonicity of $T^n$. We need to add the numbers $\kappa_j(n)$ for the right $j$ to determine $\psi_n^+$.}
\label{f:psi+}
\end{figure}

\begin{lemma}\label{l:values}
Let $n \ge 4$. Then $\psi^+_n(x)$ is even for $T^{n-1}1 < x <T^n 1$ and $\psi_n^+(x)$ is odd for $x \in (0, T^{n-1}1) \cup (T^n 1,1)$.
\end{lemma}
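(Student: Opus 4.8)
The plan is to compute $\psi^+_n(x)$ modulo $2$ directly from Lemma~\ref{l:kappas}, exploiting the fact that all but three of the coefficients $\kappa_j(n)$ are even.

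First I would record the decomposition of $\psi^+_n$ coming from the type count. Since the intervals of monotonicity of $T^n$ form a partition of $[0,1]$ (up to finitely many break points) and an interval of type $j$ maps onto $(0,T^j1)$, contributing exactly one $T^n$-preimage to each point of that interval and none elsewhere, we have
\[
\psi^+_n(x)=\sum_{j=0}^{n}\kappa_j(n)\,\mathbf{1}_{(0,T^j1)}(x)
\]
for every $x\in(0,1)$ outside the finite set $\{T^j1:0\le j\le n\}$. Next I would read off the parities from Lemma~\ref{l:kappas}: for $n\ge 4$ we have $\kappa_0(n)=2^{n-1}-1$ odd, $\kappa_n(n)=1$ odd, $\kappa_{n-1}(n)=2^{0}=1$ odd, while $\kappa_j(n)=2^{n-1-j}$ is even for every $1\le j\le n-2$. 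Since $T^01=1$, the sum above collapses modulo $2$ to
\[
\psi^+_n(x)\equiv 1+\mathbf{1}_{(0,T^{n-1}1)}(x)+\mathbf{1}_{(0,T^n1)}(x)\pmod 2 .
\]

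Finally I would finish with the three-way case split on the position of $x$ relative to the two points $T^{n-1}1<T^n1$ (this ordering is part of (\ref{q:pointsT}) together with the relation $T^{n-1}1<T^n1<1$ noted just after it). If $0<x<T^{n-1}1$ then both indicators equal $1$ and $\psi^+_n(x)\equiv 3\equiv 1$, odd; if $T^{n-1}1<x<T^n1$ then only the $T^n1$-indicator survives and $\psi^+_n(x)\equiv 2\equiv 0$, even; if $T^n1<x<1$ then both indicators vanish and $\psi^+_n(x)\equiv 1$, odd. This is exactly the assertion of the lemma.

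I do not expect a real obstacle; the only points requiring care are that the displayed formula for $\psi^+_n$ holds only away from the finitely many break points $T^j1$ (harmless, since the lemma concerns the three open intervals), and that the argument deliberately avoids having to locate $T^n1$ among $T^{n-2}1,\dots,T1$: crossing any such point changes $\psi^+_n$ by the even quantity $\kappa_j(n)$ with $1\le j\le n-2$, so the parity of $\psi^+_n$ is well defined and constant on each of $(0,T^{n-1}1)$, $(T^{n-1}1,T^n1)$ and $(T^n1,1)$ no matter how the other break points are arranged.
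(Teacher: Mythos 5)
Your proof is correct and rests on the same ingredients as the paper's: the decomposition of $\psi_n^+(x)$ as the sum of the $\kappa_j(n)$ over those types $j$ with $x\in(0,T^j1)$, together with the parities supplied by Lemma~\ref{l:kappas} (only $j=0$, $j=n-1$ and $j=n$ give odd counts). Your mod-2 packaging is a somewhat cleaner organization than the paper's proof, which instead fixes the index $m$ with $T^m1<T^n1<T^{m-1}1$ and checks the parity of each explicit partial sum interval by interval, but the underlying argument is essentially the same.
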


\begin{proof}
Let $1\le m \le n-1$ be such that $T^m 1 < T^n 1 < T^{m-1}1$. In Figure~\ref{f:psi+} we can see which numbers $\kappa_i(n)$ we need to add in order to get the values $\psi^+$ can take on each interval. For all $1 \le j \le m-1$ on the interval $(T^j1, T^{j-1}1)$ we have $\psi_n^+(x)=\sum_{i=0}^{j-1}\kappa_i(n)$. Also, $\psi_n^+(x)=\sum_{i=0}^{m-1}\kappa_i(n)$ on the interval $(T^n1, T^{m-1}1)$. The last part of Lemma~\ref{l:kappas} implies that these numbers are odd. Furthermore, $\psi_n^+(x)= \sum_{i=0}^{m-1}\kappa_i(n)+1$ on $(T^m1, T^n1)$, which is even, and for $m+1 \le j \le n-1$ we have the even value $\psi_n^+(x) = \sum_{i=0}^{j-1}\kappa_i(n)+1$ on $(T^j1, T^{j-1}1)$. On $(0, T^{n-1})$ we have $\psi_n^+(x) = \sum_{i=0}^n \kappa_i(n) = 2^n-1$, which is odd.
\end{proof}

\begin{remark}\label{r:values}{\rm
(i) By Lemma~\ref{l:values} we know a few values of $\psi_n^+$ explicitly. For example, $\psi_n^+(x) = 2^n-1$ on $(0, T^{n-1}1)$, which also the maximum of $\psi_n^+$. Since $T^{n-1}1 < T^n1$ we also know that $\psi_n^+(x) = 2^n-2$ on $(T^{n-1}1, \min\{T^{n-2}1,T^n1\})$. The minimum of $\psi_n^+$ is attained on the interval $(\max\{T^21, T^n1\},1)$ and here $\psi_n^+(x) = 2^{n-1}-1$. Since the exact position of $T^n 1$ is unknown, we cannot say more.\\
(ii) Lemma~\ref{l:values} also implies that all the odd values in the range of $\psi^+_n$ other than the maximum are smaller than the even values.
}\end{remark}

\subsubsection{The negative $\beta$-transformation between $\beta_{n-1}$ and $\beta_n$}
\noindent For the map $S$ the situation is more complicated. We will describe the structure of the images of the intervals of monotonicity of the iterates of $S$, just as we did for $T$. Recall that Figure~\ref{f:points} gives the relative positions of the points $S^k 1$. Note that the point $S^{n-1}1$ can be either to the left or to the right of $\frac{1}{\beta}$. This means that for all $\beta_{n-1} < \beta < \beta_n$ the maps $S^k$, $1\le k \le n-1$ have the same structure and the map $S^n$ can behave in two different ways, according to where the point $S^{n-1}1$ lies relative to $\frac{1}{\beta}$.

\vskip .2cm

Consider the maps $S^k$, $1 \le k \le n-1$. As before we assign a type to each of the (open) intervals of monotonicity of $S^k$ according to the shape of their images under $S^k$. We are interested in determining the numbers of intervals of each type. For $S$, we have two types, $S\big(0,\frac{1}{\beta}\big) = (0,1)$ and $S\big(\frac{1}{\beta},1\big) = (S1,1)$, and both occur once. We call the one with image $(0,1)$ type 0 and the other one type 1. Then,
\[ \psi^-_1(x) = \left\{
\begin{array}{ll}
2, &\text{if } x \in (S1,1) \subset I^-_1(2),\\
1, & \text{if }x \in (0, S1) \subset I^-_1(1).
\end{array}
\right.\]
Again we are not bothered with the endpoints of the intervals, since we are only interested in the measures of the sets. For $S^2$ the intervals of monotonicity are $\big(0, \frac{1}{\beta}-\frac{1}{\beta^2} \big)$, $\big( \frac{1}{\beta}-\frac{1}{\beta^2}, \frac{1}{\beta} \big)$, $\big( \frac{1}{\beta}, \frac{2}{\beta}-\frac{1}{\beta^2} \big)$ and $\big( \frac{2}{\beta}-\frac{1}{\beta^2}, 1\big)$.  Then
\begin{align*}
S^2 &\Big( 0, \frac{1}{\beta}-\frac{1}{\beta^2} \Big)= S^2 \Big( \frac{1}{\beta}, \frac{2}{\beta}-\frac{1}{\beta^2} \Big) = (S1,1),\\
S^2 &\Big( \frac{1}{\beta}-\frac{1}{\beta^2}, \frac{1}{\beta}\Big) = (0,1), \quad S^2 \Big( \frac{2}{\beta}-\frac{1}{\beta^2},1 \Big) = (0, S^21).
\end{align*}
So, the type 0 interval for $S$ splits into a type 0 and a type 1 interval for $S^2$ and the type 1 interval for $S$ splits into a type 1 interval and an interval with image $\big(0, S^21\big)$, which we call type 2 for $S^2$. Hence, for $S^2$ we have one type 0, two times type 1 and one type 1. This gives
\[ \psi^-_2(x) = \left\{
\begin{array}{ll}
4, & \text{if } x \in (S1,S^21) \subset I^-_2(4),\\
3, & \text{if } x \in (S^2 1,1) \subset I^-_2(3),\\
2, & \text{if } x \in (0, S1) \subset I^-_2(2).
\end{array}
\right.\]
For $S^3$ we only have to determine how the type 2 interval splits. Since $S^21 > x_{\ell} > \frac{1}{\beta}$, the interval becomes a type 0 interval and a type 3 interval with image $\big(S^3 1,1 \big)$. If $n \ge 5$, then $S^3 1 < x_s < \frac{1}{\beta}$ and the type 3 interval splits into a type 1 and a type 4 for $S^4$. We can continue in this manner upto $S^{n-1}$, see Figure~\ref{f:types}.
\begin{figure}[ht]
\centering{\includegraphics{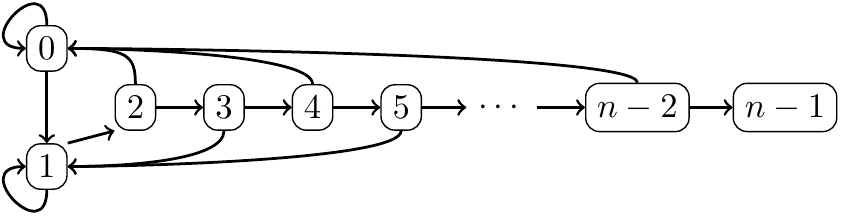}}
\vskip -.5cm
\caption{The subdivisions of the interval types for $S^{n-1}$, $\beta_{n-1} < \beta < \beta_n$ and $n$ even. The numbers in the nodes are the interval types. For odd values of $n$, the arrow starting from the node labelled ($n-2$) would end in node 1 instead of 0.}
\label{f:types}
\end{figure}

We can now inductively determine the number of intervals of monotonicity of each type for $S^{n-1}$ and then also for $S^n$. Let $\iota_j(m)$ denote the number of intervals of type $j$ that $S^m$ has. Hence, we know that $\iota_0(1)=\iota_1(1)=1$ and $\iota_0(2)=\iota_2(2)=1$ and $\iota_1(2)=2$.
\begin{lemma}\label{l:iotas}
For all $n \ge 4$ and $3 \le m \le n-1$,
\[ \iota_0(m) = 2^{m-2}, \quad \iota_{m-1}(m)=2, \quad \iota_m(m)=1,\]
and for all $1 \le j \le m-2$, $\iota_j(m)=3 \cdot 2^{m-2-j}$.
\end{lemma}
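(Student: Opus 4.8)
The plan is to prove the formula by induction on $m$, starting from the data already computed before the statement: $\iota_0(2)=\iota_2(2)=1$, $\iota_1(2)=2$, and for $m=3$ the splitting of the type~$2$ interval into a type~$0$ and a type~$3$ interval. The only ingredient that uses the dynamics of $S$ is a precise \emph{splitting dictionary} — a description of how each interval of monotonicity of $S^m$ subdivides into intervals of monotonicity of $S^{m+1}$, together with the types of the two pieces — and this relies on Lemma~\ref{l:nbonacciS} (equivalently Remark~\ref{r:orbit0}). Once the dictionary is in place the argument is a bookkeeping induction.

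First I would record the dictionary. An interval of monotonicity of $S^k$ has, under $S^k$, image $(0,1)$ (type~$0$), $(S1,1)$ (type~$1$), $(0,S^j1)$ (type~$j$ with $j\ge2$ even), or $(S^j1,1)$ (type~$j$ with $j\ge3$ odd). For $\beta_{n-1}<\beta<\beta_n$ and every index $j\le n-2$ one has the strict inequalities $S^j1>x_{\ell}>\frac{1}{\beta}$ for $j$ even and $S^j1<x_s<\frac{1}{\beta}$ for $j$ odd (apply Lemma~\ref{l:nbonacciS} with $n$ replaced by $n-1$, using $\beta>\beta_{n-1}$), while $0<S1=2-\beta<\frac{1}{\beta}$ holds for all $1<\beta<2$. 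Hence, for $1\le m\le n-2$, the interior of the image of every interval of monotonicity of $S^m$ contains the break point $\frac{1}{\beta}$, so each such interval splits into exactly two intervals of monotonicity of $S^{m+1}$; in particular $S^m$ has exactly $2^m$ intervals of monotonicity. Using that $S$ maps $(0,\frac{1}{\beta})$ onto $(0,1)$ by the decreasing map $x\mapsto1-\beta x$, maps $(\frac{1}{\beta},1)$ onto $(S1,1)$ by $x\mapsto2-\beta x$, and satisfies $S(S^j1)=S^{j+1}1$, one checks that a type~$0$ interval splits into a type~$0$ and a type~$1$ interval, an odd-type interval splits into a type~$1$ and a type~$(j+1)$ interval, and an even-type interval with $j\ge2$ splits into a type~$0$ and a type~$(j+1)$ interval.

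Translating the dictionary into counting relations, valid for $2\le m\le n-2$:
\begin{gather*}
\iota_j(m+1)=\iota_{j-1}(m)\quad (j\ge2),\\
\iota_0(m+1)=\iota_0(m)+\sum_{j\ge2,\,j\text{ even}}\iota_j(m),\qquad
\iota_1(m+1)=\iota_0(m)+\sum_{j\ge1,\,j\text{ odd}}\iota_j(m).
\end{gather*}
Assuming the claimed formula for $\iota_j(m)$, a short geometric sum gives $\sum_{j\ge2,\,j\text{ even}}\iota_j(m)=2^{m-2}$ for both parities of $m$ — the boundary terms $\iota_{m-1}(m)=2$ and $\iota_m(m)=1$ being exactly what is needed to round the truncated geometric series off to a power of two — so $\iota_0(m+1)=2^{m-2}+2^{m-2}=2^{m-1}$. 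Since there are $2^m$ intervals of monotonicity in total, $\sum_{j\ge1,\,j\text{ odd}}\iota_j(m)=2^m-2^{m-1}=2^{m-1}$, hence $\iota_1(m+1)=2^{m-2}+2^{m-1}=3\cdot2^{m-2}$, which is the middle-range value for $j=1$ at level $m+1$. Finally the shift relation $\iota_j(m+1)=\iota_{j-1}(m)$ moves the middle pattern $3\cdot2^{m-2-j}$ and the two boundary values $2$ and $1$ up one level, so the formula holds at level $m+1$, completing the induction.

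The main obstacle is getting the splitting dictionary exactly right: determining, for each of the two halves of a split interval, which branch of $S$ is applied, hence that the ``free'' endpoint $S^j1$ is sent to $S^{j+1}1$ and moves from one side of the interval to the other, so that the parity bookkeeping of the types is correct; and checking that the hypotheses of Lemma~\ref{l:nbonacciS} cover exactly the index range used (for the range $3\le m\le n-1$ of the present lemma we only ever split $S^m$ with $m\le n-2$, where $\beta>\beta_{n-1}$ makes all the relevant inequalities strict, so every interval genuinely splits into two). Once this is settled, the remainder is the routine geometric-series computation sketched above.
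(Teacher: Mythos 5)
Your proof is correct and follows essentially the same route as the paper: you derive the same recursion relations $\iota_0(m+1)=\iota_0(m)+\sum_{j\ge 2\text{ even}}\iota_j(m)$, $\iota_1(m+1)=\iota_0(m)+\sum_{j\text{ odd}}\iota_j(m)$, $\iota_j(m+1)=\iota_{j-1}(m)$ from the splitting behaviour of the interval types (justified via Lemma~\ref{l:nbonacciS}) and then run the same induction from the base case $m=3$. The only cosmetic difference is that you evaluate $\sum_{j\text{ odd}}\iota_j(m)=2^{m-1}$ by subtracting from the total count $2^m$ of intervals of monotonicity, which lets you avoid the separate even-$m$/odd-$m$ geometric-series computations carried out in the paper.
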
 

\begin{proof}
Fix $n \ge 4$. Note that for all $3 \le m \le n-1$, we have
\begin{eqnarray}
\iota_0(m) &=& \sum_{\stackrel{0 \le j \le m-1:}{j \text{ even}}} \iota_j(m-1), \quad \iota_1(m) = \iota_0(m-1) + \sum_{\stackrel{0 \le j \le m-1:}{j \text{ odd}}} \iota_j(m-1),\label{q:rec1}\\
\iota_j(m) &= & \iota_{j-1}(m-1), \quad 2 \le  j \le m.\label{q:rec2}
\end{eqnarray}
We prove the lemma by induction. By (\ref{q:rec1}) and (\ref{q:rec2}) we have for $m=3$ that $\iota_0(3)=\iota_2(3)=2$, $\iota_1(3)=3$ and $\iota_3(3)=1$.

Now, suppose that the lemma is true for some odd $3 \le m \le n-2$. Then, for $m+1$ the induction hypothesis gives
\[ \iota_0(m+1) = \sum_{\stackrel{0 \le j \le m:}{j \text{ even}}} \iota_j(m)
= 2^{m-2} + 3\cdot 2^{m-4} + \cdots + 3\cdot 2 + 2 = \sum_{i=0}^{m-2} 2^i +1 = 2^{m-1}.\]
Also,
\begin{eqnarray*}
\iota_1(m+1) &=& \iota_0(m) + \sum_{\stackrel{0 \le j \le m:}{j \text{ odd}}} \iota_j(m) = 2^{m-2} + 1 + \sum_{\stackrel{0 \le i \le m-3:}{i \text{ even}}} 3\cdot 2^i\\
&=& 2^{m-1} + 2^{m-2} = 3\cdot 2^{m-2}.
\end{eqnarray*}
For $2 \le j \le m+1$ the result follows from the fact that $\iota_j(m+1)=\iota_{j-1}(m)$.

If $m$ is even, then
\[ \iota_0(m+1) = 2^{m-2} + 3\cdot 2^{m-4} + \cdots + 3 + 1\]
and
\[ \iota_1(m+1) = 2^{m-2} + 3\cdot 2^{m-3} + 3\cdot 2^{m-5} + \cdots + 3\cdot 2 +2.\]
The rest of the proof is exactly the same.
\end{proof}

As for $T$ we are interested in $\psi^-_n$. Therefore, we also need to know $\iota_j(n)$ for all $0 \le j \le n$. These values depend on the position of $S^{n-1}1$, see Figure~\ref{f:psi-}.

\begin{lemma}\label{l:cases}
Let $n \ge 4$ be an even number.
\begin{itemize}
\item[(1)] If $S^{n-1}1 < \frac{1}{\beta}$, then $\iota_0(n) = 2^{n-2}$, $\iota_{n-1}(n)=2$, $\iota_n(n)=1$ and for all $1 \le j \le n-2$, $\iota_j(n)=3 \cdot 2^{n-2-j}$. Moreover, the type $n$ interval has image $(0,S^n1)$ under $S^n$.
\item[(2)] If $S^{n-1}1 = \frac{1}{\beta}$, then $\iota_0(n) = 2^{n-2}$, $\iota_{n-1}(n)=2$, $\iota_n(n)=0$ and for all $1 \le j \le n-2$, $\iota_j(n)=3 \cdot 2^{n-2-j}$.
\item[(3)] If $S^{n-1}1 > \frac{1}{\beta}$, then $\iota_0(n) = 2^{n-2}$, $\iota_1(n)=3 \cdot 2^{n-3}-1$, $\iota_{n-1}(n)=2$, $\iota_n(n)=1$ and for all $2 \le j \le n-2$, $\iota_j(n)=3 \cdot 2^{n-2-j}$. Moreover, the type $n$ interval has image $(S1,S^n1)$ under $S^n$.
\end{itemize}

\noindent If on the other hand $n \ge 4$ is odd, then we have the following.
\begin{itemize}
\item[(1*)] If $S^{n-1}1 < \frac{1}{\beta}$, then $\iota_0(n) = 2^{n-2}-1$, $\iota_{n-1}(n)=2$, $\iota_n(n)=1$ and for all $1 \le j \le n-2$, $\iota_j(n)=3 \cdot 2^{n-2-j}$. Moreover, the type $n$ interval has image $(S^n1,1)$ under $S^n$.
\item[(2*)] If $S^{n-1}1 = \frac{1}{\beta}$, then $\iota_0(n) = 2^{n-2}$, $\iota_{n-1}(n)=2$, $\iota_n(n)=0$ and for all $1 \le j \le n-2$, $\iota_j(n)=3 \cdot 2^{n-2-j}$.
\item[(3*)] If $S^{n-1}1 > \frac{1}{\beta}$, then $\iota_0(n) = 2^{n-2}$, $\iota_{n-1}(n)=2$, $\iota_n(n)=1$ and for all $1 \le j \le n-2$, $\iota_j(n)=3 \cdot 2^{n-2-j}$. Moreover, the type $n$ interval has image $(S^n1,1)$ under $S^n$.
\end{itemize}
\end{lemma}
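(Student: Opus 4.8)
The plan is to push the type-counting behind Lemma~\ref{l:iotas} one step further, from $S^{n-1}$ to $S^{n}$; the only genuinely new feature is the way the single type $(n-1)$ interval of $S^{n-1}$ is subdivided, and this is exactly what forces the three cases. Throughout, $\beta_{n-1}<\beta<\beta_n$ and $n\ge4$, so Lemma~\ref{l:iotas} applies with $m=n-1$. The starting observation is that every type $j$ interval of $S^{n-1}$ with $0\le j\le n-2$ still splits in the standard way for $S^{n}$. Indeed, by Remark~\ref{r:orbit0}, which fixes the order of the points $S^01,\dots,S^{n-1}1$ for $\beta_{n-1}<\beta<\beta_n$, together with $x_s<\frac{1}{\beta}<x_\ell$, the $S^{n-1}$-image of such an interval is $(0,1)$ when $j=0$, is $(S^j1,1)$ with $S^j1<x_s<\frac{1}{\beta}$ when $j$ is odd, and is $(0,S^j1)$ with $S^j1>x_\ell>\frac{1}{\beta}$ when $j\ge2$ is even; in each case $\frac{1}{\beta}$ is an interior point, so the interval splits into exactly two intervals of monotonicity of $S^n$, of types $0,1$ (if it was type $0$), of types $1,j+1$ (if $j$ is odd), or of types $0,j+1$ (if $j\ge2$ is even). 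Since a type $j$ interval of $S^n$ with $2\le j\le n-1$ arises only from the (standardly splitting) type $(j-1)$ interval of $S^{n-1}$, we get $\iota_j(n)=\iota_{j-1}(n-1)$, hence by Lemma~\ref{l:iotas} $\iota_j(n)=3\cdot2^{n-2-j}$ for $2\le j\le n-2$ and $\iota_{n-1}(n)=2$, in all six cases.

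It remains to treat the type $(n-1)$ interval of $S^{n-1}$, whose subdivision is the one feature not determined by Remark~\ref{r:orbit0}, since the position of $S^{n-1}1$ relative to $\frac{1}{\beta}$ is left open by Lemma~\ref{l:nbonacciS} when $\beta<\beta_n$. Its $S^{n-1}$-image is $(S^{n-1}1,1)$ when $n$ is even (then $n-1$ is odd) and $(0,S^{n-1}1)$ when $n$ is odd. Using $Sx=1-\beta x$ on $[0,\frac{1}{\beta})$, $Sx=2-\beta x$ on $(\frac{1}{\beta},1]$, and $S^n1=S(S^{n-1}1)$, one reads off the six cases. For $n$ even the image contains $\frac{1}{\beta}$ in its interior exactly when $S^{n-1}1<\frac{1}{\beta}$: if $S^{n-1}1<\frac{1}{\beta}$ the interval splits into a type $1$ interval and a type $n$ interval with $S^n$-image $(0,S^n1)$; if $S^{n-1}1=\frac{1}{\beta}$ it does not split and becomes a type $1$ interval, with no type $n$ interval appearing; if $S^{n-1}1>\frac{1}{\beta}$ it does not split and becomes a type $n$ interval with $S^n$-image $(S1,S^n1)$, contributing no type $1$ interval. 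For $n$ odd the roles of type $0$ and type $1$ are swapped and the image $(0,S^{n-1}1)$ contains $\frac{1}{\beta}$ in its interior exactly when $S^{n-1}1>\frac{1}{\beta}$: if $S^{n-1}1<\frac{1}{\beta}$ it becomes a type $n$ interval with $S^n$-image $(S^n1,1)$ and contributes no type $0$ interval; if $S^{n-1}1=\frac{1}{\beta}$ it becomes a type $0$ interval, with no type $n$ interval; if $S^{n-1}1>\frac{1}{\beta}$ it splits into a type $0$ interval and a type $n$ interval with $S^n$-image $(S^n1,1)$.

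Combining these through (\ref{q:rec1}) yields the remaining counts. A type $n$ interval exists, with $\iota_n(n)=1$, precisely when $S^{n-1}1\ne\frac{1}{\beta}$, and $\iota_n(n)=0$ when $S^{n-1}1=\frac{1}{\beta}$, with $S^n$-image as recorded above. If the type $(n-1)$ interval also split in the standard way, then (\ref{q:rec1}) and Lemma~\ref{l:iotas} would give $\iota_0(n)=2^{n-2}$ and $\iota_1(n)=3\cdot2^{n-3}$, by the same geometric-series computation as in the proof of Lemma~\ref{l:iotas}. The actual values deviate from these exactly in the two configurations where the type $(n-1)$ interval fails to contribute its usual type: $\iota_1(n)=3\cdot2^{n-3}-1$ when $n$ is even and $S^{n-1}1>\frac{1}{\beta}$, and $\iota_0(n)=2^{n-2}-1$ when $n$ is odd and $S^{n-1}1<\frac{1}{\beta}$. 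Assembling everything reproduces (1)--(3) and (1*)--(3*).

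The geometric-series bookkeeping is routine. The delicate point is the middle paragraph: deciding correctly, in each of the six configurations, whether the type $(n-1)$ interval of monotonicity of $S^{n-1}$ splits into one or two intervals of monotonicity of $S^n$, which type(s) it then contributes, and what the $S^n$-image of the resulting type $n$ interval is. This is precisely where the undetermined position of $S^{n-1}1$ relative to $\frac{1}{\beta}$ enters, and where a parity or sign slip would be easiest to make.
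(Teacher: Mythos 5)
Your proposal is correct and follows essentially the same route as the paper's proof: take the counts from Lemma~\ref{l:iotas} at level $n-1$, observe that the recursions (\ref{q:rec1})--(\ref{q:rec2}) govern every type except possibly the single type $(n-1)$ interval, and then case-analyse how that interval subdivides according to the parity of $n$ and the position of $S^{n-1}1$ relative to $\frac{1}{\beta}$. Your six-way case analysis of the type $(n-1)$ interval, including the images of the resulting type $n$ intervals and the two deviating counts $\iota_1(n)=3\cdot2^{n-3}-1$ and $\iota_0(n)=2^{n-2}-1$, matches the paper exactly.
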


\begin{proof}
By Lemma~\ref{l:iotas} we know that $\iota_0(n-1) = 2^{n-3}$, $\iota_{n-2}(n-1)=2$, $\iota_{n-1}(n-1)=1$ and for all $1 \le j \le m-2$, $\iota_j(n-1)=3 \cdot 2^{n-3-j}$. First assume that $n$ is even. Recall that the image of the type ($n-1$) interval under $S^{n-1}$ is $(S^{n-1}1,1)$. We treat each case separately.

\vskip .2cm
(1) Suppose $S^{n-1}1 < \frac{1}{\beta}$. Then the type ($n-1$) interval splits into an interval with image $(S1,1)$, i.e., a type 1 interval, and a type $n$ interval with image $(0,S^n1)$ under $S^n$. Hence, the recursions (\ref{q:rec1}) and (\ref{q:rec2}) still hold and this gives the result in this case.

(2) If $S^{n-1}1 = \frac{1}{\beta}$, then $S(S^{n-1}1,1)=(S1,1)$. Hence, there is no type $n$ interval for $S^n$, but (\ref{q:rec1}) and (\ref{q:rec2}) still hold for the other types. This gives the lemma for (b).

(3) Suppose $S^{n-1}1 > \frac{1}{\beta}$. Then $S(S^{n-1}1,1) = (S1,S^n1)$. Thus, the type ($n-1$) interval does not split and becomes a type $n$ interval for $S^n$. This implies that there is one type 1 interval less than there would be according to (\ref{q:rec1}). This gives the lemma in the even case.

\vskip .2cm
Now, let $n$ be odd and recall that the image of the type ($n-1$) interval under $S^{n-1}$ is $(0,S^{n-1}1)$. 

\vskip .2cm
(1*) Suppose $S^{n-1}1 < \frac{1}{\beta}$. Then $S(0, S^{n-1}1) = (S^n1,1)$. Thus, the type ($n-1$) does not split and becomes a type $n$ interval for $S^n$. Therefore, there is one type 0 interval less than there would be according to (\ref{q:rec1}). For all other values of $j$, the formulas (\ref{q:rec1}) and (\ref{q:rec2}) still hold.

(2*) If $S^{n-1}1 = \frac{1}{\beta}$, then $S(0,S^{n-1}1)=(0,1)$. Hence, there is no type $n$ interval for $S^n$, but (\ref{q:rec1}) and (\ref{q:rec2}) still hold for the other types. This gives the lemma for (b).

(3*) Suppose $S^{n-1}1 > \frac{1}{\beta}$. Then the type ($n-1$) interval splits into a type 0 interval and a type $n$ interval with image $(S^n1,1)$ under $S^n$. Hence, the recursions (\ref{q:rec1}) and (\ref{q:rec2}) still hold and this gives the lemma.
\end{proof}

\begin{figure}[ht]
\centering{\includegraphics{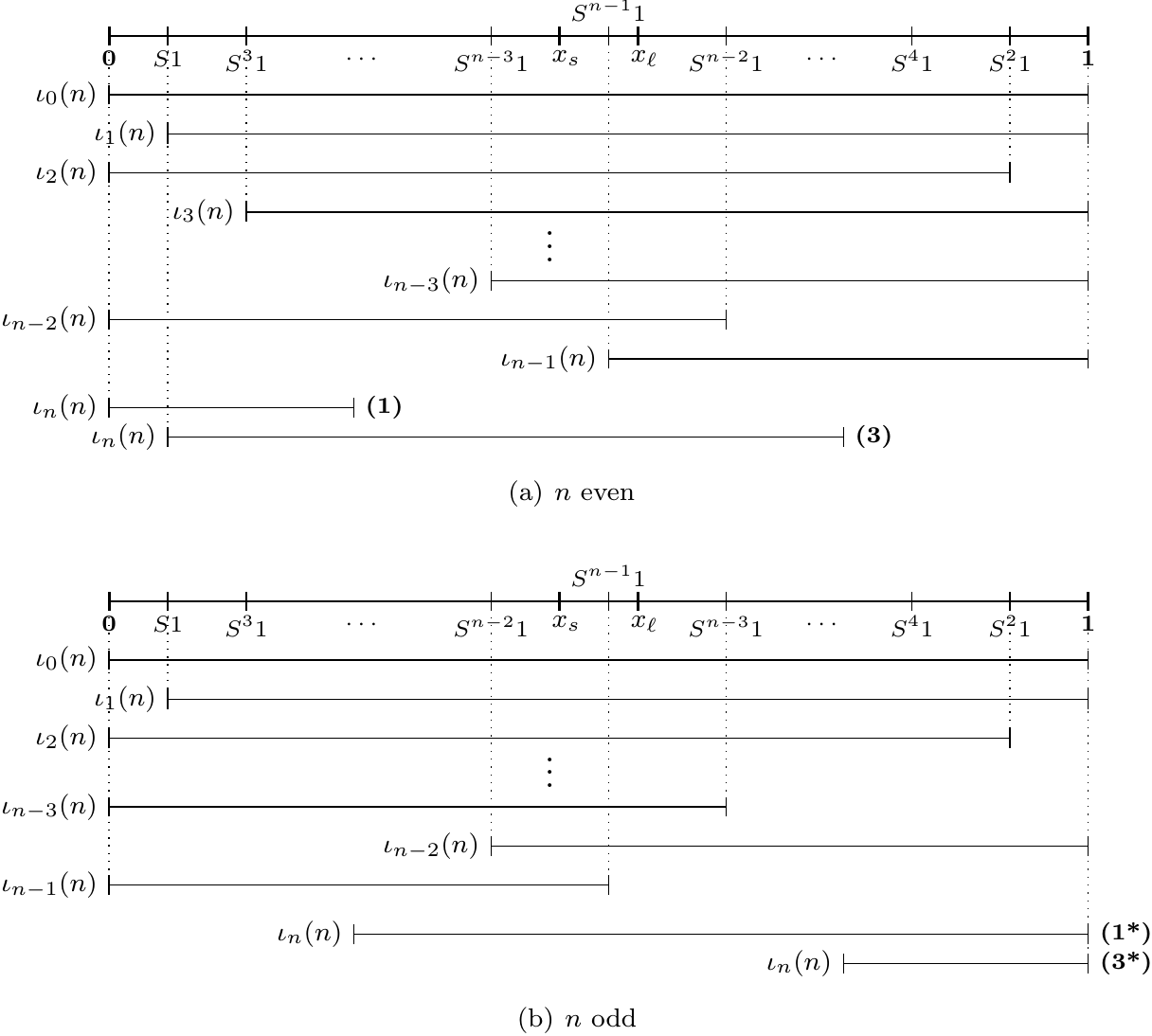}}
\caption{The images of the different types of intervals of monotonicity of $S^n$. We need to add the numbers $\iota_j(n)$ for the right $j$ to determine $\psi_n^-$. The (1), (3), (1*) and (3*) indicate the different cases from Lemma~\ref{l:cases}.}
\label{f:psi-}
\end{figure}

With this information we are ready to prove the next theorem.
\begin{theorem}\label{t:noniso}
For $n \ge 2$, let $\beta_n$ be the $n$-th multinacci number, i.e., the Pisot number with minimal polynomial $x^n-x^{n-1}-\cdots -x-1$. Fix $n$ and let $\beta_{n-1} < \beta < \beta_n$. Then the positive $\beta$-transformation $T$ and the negative $\beta$-transformation $S$ are not measurably isomorphic.
\end{theorem}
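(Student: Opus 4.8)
The argument is by contradiction. Suppose $\phi$ is a measurable isomorphism between $([0,1],\mathcal B,\mu,T)$ and $([0,1],\mathcal B,\nu,S)$. Then $\phi\circ T^n=S^n\circ\phi$ holds $\mu$-a.e.\ for every $n\ge1$, so $\psi_n^+=\psi_n^-\circ\phi$ a.e.\ and $\phi$ carries each level set $I_n^+(k)$ onto $I_n^-(k)$ up to a null set; hence $\mu(I_n^+(k))=\nu(I_n^-(k))$ for all $n,k\ge1$. It is therefore enough to produce, for the given $\beta$ with $\beta_{n-1}<\beta<\beta_n$, a single pair $(n,k)$ for which these two numbers differ, and I will always arrange that one of $I_n^+(k),I_n^-(k)$ is empty while the other contains a nonempty open interval; since $\mu\sim\lambda\sim\nu$ this decides both measures. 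The cases $n=2,3$ are the two worked examples in the preceding subsections (using the third iterate), so from now on $n\ge4$ and I work with the $n$-th iterates.

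On the $T$-side, Lemmas~\ref{l:kappas} and~\ref{l:values} with Remark~\ref{r:values} pin down the top of the range of $\psi_n^+$: it is monotone decreasing on $[0,1)$, its maximum is the odd number $2^n-1$, taken on $(0,T^{n-1}1)$, its next value is the even number $2^n-2$, taken on the nonempty interval $\bigl(T^{n-1}1,\min\{T^{n-2}1,T^n1\}\bigr)$, and further down it only takes numbers of the form $2^n-2^i$ together with certain odd numbers $2^n-2^{n-j}-1$. In particular $\psi_n^+$ never equals $2^n$, it equals $2^n-2$ on a set of positive $\mu$-measure, and — depending on the position of $T^n1$ — it either does or does not attain $2^n-3$.

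On the $S$-side I read off $\psi_n^-$ from Lemmas~\ref{l:iotas} and~\ref{l:cases}, Figure~\ref{f:psi-}, and the orderings of the points $S^k1$ from Remark~\ref{r:orbit0} (together with $x_s<1/\beta<x_\ell$). The point $S^{n-1}1$ may lie on either side of $1/\beta$, which is the trichotomy of Lemma~\ref{l:cases}, and I refine this further by the position of $S^{n-1}1$ relative to $x_s,x_\ell$ and of $S^n1$ relative to the neighbouring orbit points. In each resulting case one checks first that $\psi_n^-\le2^n-1$ everywhere — the number of monotonicity intervals of $S^n$ is $2^n$ or $2^n-1$ by Lemma~\ref{l:cases}, but no point lies in all of them, since the interval types covering a neighbourhood of $0$, those covering a neighbourhood of $1$, and the at most one type-$n$ interval cannot have a common point — and then computes $\max\psi_n^-$ together with the two or three values of $\psi_n^-$ just below it and the intervals on which they occur. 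Comparing this with the sparse list of large values of $\psi_n^+$ above yields in each case a target $k$ near the top of the range: $k=2^n-1$ when $\psi_n^-$ fails to reach $2^n-1$, or one of $k=2^n-2,\,2^n-3$ when one of $\psi_n^\pm$ attains it on a nonempty interval while the other, by its structure, skips it altogether. In every case $\mu(I_n^+(k))\neq\nu(I_n^-(k))$, contradicting the existence of $\phi$.

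The main obstacle is exactly this last step. Unlike $\psi_n^+$, the function $\psi_n^-$ is not monotone, so its range cannot be read off a single interval; the trichotomy of Lemma~\ref{l:cases} and its refinements must each be carried through, checking in each case that the type-$n$ interval together with the ``near $0$'' and ``near $1$'' interval types never all overlap (so $\max\psi_n^-$ is $2^n-1$ rather than $2^n$), and whether the type-$n$ image does or does not cover the region on which all the other types overlap (this fixes whether $\psi_n^-$ has $2^n-1$, $2^n-2$ or $2^n-3$ as its largest value on a set of positive measure). Keeping the orderings of the points $S^k1$ and $S^n1$ straight across the parity of $n$, and matching the value forced on the $S$-side against the ones permitted on the $T$-side, is where the care is needed; once that bookkeeping is done, the separating $k$ is immediate.
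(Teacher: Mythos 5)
Your overall strategy is the paper's: reduce to $\mu(I_n^+(k))=\nu(I_n^-(k))$ for all $k$, use Lemmas~\ref{l:kappas}--\ref{l:cases} to tabulate $\psi_n^{\pm}$ near the top of their ranges, and exhibit a discrepancy. The gap is in the final separating step. You claim that in every case a single $k\in\{2^n-1,2^n-2,2^n-3\}$ works, with one of $I_n^+(k),I_n^-(k)$ of measure zero while the other contains an interval. This fails exactly in the cases where the positions of both $T^n1$ and $S^n1$ are undetermined. For instance, for $n$ even with $S^{n-1}1<\frac{1}{\beta}$ (case (1) of Lemma~\ref{l:cases}): $\psi_n^-$ attains $2^n-1$ (its maximum, always shared with $\psi_n^+$); it attains $2^n-2$ precisely when $S^n1\in(S^{n-3}1,S^{n-1}1)$, namely on $(S^{n-3}1,S^n1)$ where the type-$n$ image $(0,S^n1)$ contributes one extra preimage; and it always attains $2^n-3$. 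Meanwhile $\psi_n^+$ always attains $2^n-2$ and attains $2^n-3$ precisely when $T^n1<T^{n-2}1$. Since the subcases on the two sides are independent, no single value from your short list separates uniformly, and the further subdivision that would be needed (which leads to values well below the top, such as $2^n-4$) is not carried out in your proposal.

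The paper's resolution of precisely these cases rests on an idea absent from your write-up: Remark~\ref{r:values}(ii), the structural fact that for $\psi_n^+$ every odd value other than the maximum $2^n-1$ is smaller than every even value. In case (1) one then plays two values of $\psi_n^-$ against each other: $\psi_n^-$ attains the odd, non-maximal value $2^n-3$ on $(\max\{S^n1,S^{n-3}1\},S^{n-1}1)$ and the even value $2^{n-1}<2^n-3$ on $(0,\min\{S^n1,S1\})$, both of positive measure; $\psi_n^+$ cannot realise both on positive measure sets, so at least one of the two level sets is $\mu$-null while both are of positive $\nu$-measure. The symmetric argument with the pair $2^n-3$ and $3\cdot 2^{n-2}$ handles case (3*) for odd $n$. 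Only in the remaining cases (2), (3), (1*), (2*) does your ``one of $\psi_n^{\pm}$ skips a value near the top'' mechanism suffice, with $k=2^n-2$. Two smaller inaccuracies: $\max\psi_n^-=2^n-1$ in every case, so your contingency ``$k=2^n-1$ when $\psi_n^-$ fails to reach $2^n-1$'' never occurs; and the largest value of $\psi_n^-$ attained on positive measure is always $2^n-1$, not ``$2^n-1$, $2^n-2$ or $2^n-3$'' depending on the type-$n$ interval.
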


\begin{proof}
For the proof of this theorem we calculate some values in the range of $\psi_n^-$ and compare them to the information about $\psi^+_n$ obtained in Lemma~\ref{l:values} and Remark~\ref{r:values}. Our goal is to show that in all cases $\psi_n^+$ takes a value that does not occur for $\psi_n^-$ or the other way around. Just as for $T$, the points $S^k 1$ determine a partition of the interval $(0,1)$ into intervals on which $\psi^-_n$ is constant. To determine the values of $\psi_n^-$ we need to add the numbers $\iota_j(n)$ for the right $j$. For even $n$ the function $\psi_n^-$ takes its maximum on the interval $(S^{n-1}1,S^{n-2}1)$ and for odd $n$ the maximum is attained on the interval $(S^{n-2}1, S^{n-1}1)$, see Figure~\ref{f:psi-}.

First, let $n$ be an even number. We treat the three cases (1), (2) and (3) from Lemma~\ref{l:cases} separately.
\vskip .1cm

(1) If $S^{n-1}1< \frac{1}{\beta}$, then $0 < S^n 1 < x_s$. For $x \in (S^{n-1}1,S^{n-2}1)$, we have
\[ \psi_n^-(x) = \sum_{j=0}^{n-1} \iota_j(n) =2^{n-2} + 2 + \sum_{i=0}^{n-3} 3 \cdot 2^i = 2^{n-2} + 2+ \sum_{i=0}^{n-3} (2^{i+1} + 2^i) = 2^n-1.\]
Hence, according to Remark~\ref{r:values}(i), the maxima of $\psi_n^+$ and $\psi_n^-$ are equal. For $x \in (\max\{S^n 1,S^{n-3}1\}, S^{n-1}1)$ it holds that $\psi_n^-(x) = 2^n-1-\iota_{n-1}(n) = 2^n-3$. Note that $2^n-3$ is an odd number smaller than the maximum of $\psi_n^+$. On the other hand, on $(0, \min\{S^n 1, S1\})$,
\[ \psi_n^-(x) = \sum_{\stackrel{0 \le j \le n:}{j \text{ even}}} \iota_j(n) = 2^{n-2} + 1 + \sum_{\stackrel{0 \le j \le n-4:}{j \text{ even}}} 3\cdot 2^{n-j} = 2^{n-2} + 1 + 2^{n-2} -1=2^{n-1}.\]
Recall that by Lemma~\ref{l:values}, all the odd values in the range of $\psi^+_n$ other than the maximum must be smaller than the even values. Since $2^{n-1}$ is an even number and $2^{n-1} < 2^n-3$, either $\mu\big( I_n^+(2^n-3)\big)=0$ or $\mu\big(I_n^+(2^{n-1})\big)=0$. On the other hand, $\nu\big(I_n^-(2^{n-1})\big)>0$ and $\nu\big( I_n^-(2^n-3)\big)>0$. This implies that $T$ and $S$ cannot be isomorphic transformations.

\vskip .1cm
(2) If $S^{n-1}1 = \frac{1}{\beta}$, the maximal value of $\psi_n^-$ can be calculated just as for (a) and is equal to $2^n-1$. Then
\begin{eqnarray*}
\psi^-_n (x) \! \! &= \, \, 2^n-1 - \iota_{n-1}(n) = 2^n-3, & \text{if } x \in (S^{n-3}1,S^{n-1}1),\\
\psi^-_n (x) \! \! &= \, \, 2^n-1 - \iota_{n-2}(n) = 2^n-4, & \text{if } x \in (S^{n-2}1,S^{n-4}1),
\end{eqnarray*}
and $\psi_n^-(x) \le 2^n-4$ for all other $x \in [0,1]$. This means that $\psi_n^-$ does not take the value $2^n-2$, i.e., $\nu\big(I_n^-(2^n-2)\big)=0$. By Remark~\ref{r:values}(i), we know that $\mu\big(I_n^+(2^n-2)\big)>0$, showing that $T$ and $S$ cannot be isomorphic in this case.

\vskip .1cm

(3) If $S^{n-1} 1> \frac{1}{\beta}$, then $S^{n-2} 1 < S^n 1 < 1$. For $x \in (S^{n-1}1,S^{n-2}1)$, we have
\[ \psi_n^-(x) = 2^{n-2} + 3\cdot 2^{n-3}-1 + \sum_{i=0}^{n-4} 3 \cdot 2^i + 2 +1 =2^n-1.\]
Since $S^{n-2} 1 < S^n$, we have
\begin{eqnarray*}
\psi^-_n (x) \! \! &= \, \, 2^n-1 - \iota_{n-1}(n) = 2^n-3, & \text{if } x \in (S^{n-3}1,S^{n-1}1),\\
\psi^-_n (x) \! \! &= \, \, 2^n-1 - \iota_{n-2}(n) = 2^n-4, & \text{if } x \in (S^{n-2}1, \min\{S^n1,S^{n-4}1\}),
\end{eqnarray*}
and $\psi_n^-(x) \le 2^n-4$ for all other $x \in [0,1]$. Again $\psi_n^-$ does not take the value $2^n-2$ and thus by Remark~\ref{r:values}(i) $T$ and $S$ cannot be isomorphic. This finishes the proof for even $n$.

For odd $n$ the proof goes more or less along the same lines. Recall Figure~\ref{f:points}(b) and recall that the function $\psi_n^-$ takes its maximum on the interval $(S^{n-2}1,S^{n-1}1)$. Similar to above, it can be shown that also for odd $n$ this maximum is equal to $2^n-1$ in all cases. We will use Lemma~\ref{l:cases} to determine other values of $\psi_n^-$ as above and therefore we treat the three cases (1*), (2*) and (3*) separately.
\vskip .1cm

(1*) Suppose that $S^{n-1}1< \frac{1}{\beta}$. Then $0 < S^n 1 < S^{n-2}1$, which implies that
\begin{eqnarray*}
\psi^-_n (x) \! \! &= \, \, 2^n-1 - \iota_{n-1}(n) = 2^n-3, & \text{if } x \in (S^{n-1}1,S^{n-3}1),\\
\psi^-_n (x) \! \! &= \, \, 2^n-1 - \iota_{n-2}(n) = 2^n-4, & \text{if } x \in (\max\{S^n1,S^{n-4}1, S^{n-2}1\}),
\end{eqnarray*}
and $\psi_n^-(x) \le 2^n-4$ for all other $x \in [0,1]$. This means that $\psi_n^-$ does not take the value $2^n-2$, i.e., $\nu\big(I_n^-(2^n-2)\big)=0$. By Remark~\ref{r:values}(i), we know that $\mu\big(I_n^+(2^n-2)\big)>0$, showing that $T$ and $S$ cannot be isomorphic in this case.

\vskip .1cm
(2*) If $S^{n-1}1 = \frac{1}{\beta}$, then
\begin{eqnarray*}
\psi^-_n (x) \! \! &= \, \, 2^n-1 - \iota_{n-1}(n) = 2^n-3, & \text{if } x \in (S^{n-1}1, S^{n-3}1),\\
\psi^-_n (x) \! \! &= \, \, 2^n-1 - \iota_{n-2}(n) = 2^n-4, & \text{if } x \in (S^{n-4}1,S^{n-2}1),
\end{eqnarray*}
and $\psi_n^-(x) \le 2^n-4$ for all other $x \in [0,1]$. Again $\psi_n^-$ does not take the value $2^n-2$ and thus by Remark~\ref{r:values}(i) $T$ and $S$ cannot be isomorphic.
\vskip .1cm

(3*) If $S^{n-1} 1> \frac{1}{\beta}$, then $x_{\ell} < S^n 1 < 1$. According to Remark~\ref{r:values}, the maxima of $\psi_n^+$ and $\psi_n^-$ are equal. For $x \in (S^{n-1}1,\min\{S^n1 , S^{n-3}1\})$, we have $ \psi_n^-(x) = 2^n-1-\iota_{n-1}(n)=2^n-3$, which is an odd number smaller than the maximum of $\psi_n^+$. On the other hand, on $(\max\{S^n 1, S^21\},1)$,
\[ \psi_n^-(x) = \iota_0(n)+\sum_{\stackrel{0 \le j \le n:}{j \text{ odd}}} \iota_j(n) = 2^{n-2} + 3\cdot 2^{n-3} + \cdots + 3 +1 = 2^{n-2} + 2^{n-1},\]
which is an even number and bigger than $2^n-3$. By Remark~\ref{r:values}(ii), either $\mu\big(I_n^+(2^{n-2}+2^{n-1})\big)=0$ or $\mu\big( I_n^+(2^n-3)\big)=0$, while $\nu\big(I_n^-(2^{n-2}+2^{n-1})\big)>0$ and $\nu\big( I_n^-(2^n-3)\big)>0$. This implies that $T$ and $S$ cannot be isomorphic transformations.
\end{proof}

\begin{remark}{\rm
Note that the previous theorem gives an illustration of the well known fact that two one-sided Markov shifts on a finite alphabet with the same topological entropy do not necessarily have to be isomorphic. For all Pisot numbers $\beta$ the maps $T$ and $S$ are isomorphic to a one-sided finite alphabet Markov shift with topological entropy equal to $h_{\mu}(T)=\log \beta$. Since Pisot numbers lie dense in the interval $(\beta_2, 2)$, there are many Pisot numbers in this interval for which the maps $T$ and $S$ are not isomorphic. Hence, the same holds for the associated Markov shifts.
}\end{remark}

\begin{remark}{\rm
One can still wonder how different the dynamics of positive slope and negative slope piecewise linear maps really is. For example, in case $1 <\beta <2$ is not a multinacci number, does there exists any piecewise linear map with constant slope $\beta$ that is isomorphic to the map $S$? If the map $S$ is a piecewise linear Markov map for the $\beta$ in question, then one can always construct an isomorphic piecewise linear map with constant slope $\beta$ based on the Markov partition and the corresponding (0,1)-matrix. But what if $S$ is not a piecewise linear Markov map?
}\end{remark}

\begin{remark}{\rm
Theorems~\ref{t:multinacci} and~\ref{t:noniso} will most likely extend to $\beta$-transformations with more than 2 digits, i.e., with $\beta >2$. We then define $T_{\beta} x = \beta x - \lfloor \beta x \rfloor$ and $S_{\beta} x = 1+\lfloor \beta x \rfloor -\beta x$. The two maps $T_{\beta}$ and $S_{\beta}$ are probably isomorphic if and only if $\beta$ is an integer or a `generalised multinacci number'. The generalised multinacci numbers are the numbers satisfying $1 = \frac{\lfloor \beta \rfloor}{\beta}  + \cdots + \frac{\lfloor \beta \rfloor}{\beta^n} + \frac{j}{\beta^{n+1}}$ with $ j \in \{0,1, \ldots, \lfloor \beta \rfloor \}$ and $n \ge 1$. For $1 < \beta < 2$ this leaves only the multinacci numbers.
}\end{remark}

\end{document}